\newtheorem{theorem}{Theorem}[section]
\newtheorem{lemma}[theorem]{Lemma}
\newtheorem{question}[theorem]{Question}
\newtheorem{definition}[theorem]{Definition}
\newcommand{\brac}[1]{\left( #1\right)}
\newcommand{\sqbrac}[1]{\left[ #1\right]}
\newcommand{\mbb}[1]{\mathbb{#1}}
\DeclareMathOperator{\kernel}{kernel}
\DeclareMathOperator{\Image}{Image}
\DeclareMathOperator{\Cor}{Cor}
\DeclareMathOperator{\Sp}{Sp}
\DeclareMathOperator{\SL}{SL}
\DeclareMathOperator{\SU}{SU}
\DeclareMathOperator{\GL}{GL}
\DeclareMathOperator{\GO}{GO}
\DeclareMathOperator{\Oo}{O}
\DeclareMathOperator{\N}{N}
\DeclareMathOperator{\C}{C}
\DeclareMathOperator{\PGO}{PGO}
\DeclareMathOperator{\PGL}{PGL}
\DeclareMathOperator{\GU}{GU}
\DeclareMathOperator{\GSp}{GSp}
\DeclareMathOperator{\Spin}{Spin}
\DeclareMathOperator{\As}{\left(A,\sigma\right)}
\DeclareMathOperator{\xx}{\varkappa}
\DeclareMathOperator{\muu}{\underline{\mu}}
\DeclareMathOperator{\su}{\underline{\sigma}}
\DeclareMathOperator{\Sim}{Sim}
\DeclareMathOperator{\HH}{H}
\DeclareMathOperator{\Br}{Br}
\DeclareMathOperator{\End}{End}
\DeclareMathOperator{\Sn}{Sn(A,\sigma)}
\title{On Serre's injectivity question and norm principle} 
\author{Nivedita Bhaskhar \\ \small{Department of Mathematics \& Computer Science, Emory University, Atlanta, GA 30322, USA.}\\ \href{mailto:nbhaskh@emory.edu}{\small{\tt nbhaskh@emory.edu}}}
\date{}
\begin{document}
\maketitle

\begin{abstract}
Let $k$ be a field of characteristic not $2$. We give a positive answer to Serre's injectivity question for any smooth connected reductive $k$-group whose Dynkin diagram contains connected components only of type $A_n$, $B_n$ or $C_n$. We do this by relating Serre's question to the norm principles proved by Barquero and Merkurjev. We give a scalar obstruction defined up to spinor norms whose vanishing will imply the norm principle for the non-trialitarian $D_{n}$ case and yield a positive answer to Serre's question for connected reductive $k$-groups whose Dynkin diagrams contain components of non-trialitarian type $D_n$ also. We also investigate Serre's question for reductive $k$-groups whose derived subgroups admit quasi-split simply connected covers. 
\end{abstract}

\section{Introduction}

Let $k$ be a field. Then the following question of Serre, which is open in general, asks 

\begin{question}[Serre, \cite{SE}, p. 233]
Let $G$ be any connected linear algebraic group over a field $k$. Let $L_1,L_2,\ldots,L_r$ be finite field extensions of $k$ of degree $d_1,d_2,\ldots,d_r$ respectively such that $\gcd_i(d_i)=1$. Then is the following sequence exact ?

\[1\to \HH^1(k,G)\to \prod_{i=1}^{r}\HH^1(L_i,G)\] 
\end{question}

The classical result that the index of a central simple algebra divides the degrees of its splitting fields answers Serre's question affirmatively for the group $\PGL_n$. Springer's theorem for quadratic forms answers it affirmatively for the (albeit sometimes disconnected) group $O(q)$ and Bayer-Lenstra's theorem (\cite{BL}) for the groups of isometries of algebras with involutions. Jodi Black (\cite{JB}) answers Serre's question positively for absolutely simple simply connected and adjoint $k$-groups of classical type. In this paper, we use and extend Jodi's result to connected \textit{reductive} $k$-groups whose Dynkin diagram contains connected components only of type $A_n$, $B_n$ or $C_n$.

\begin{theorem}\label{mainthm}
Let $k$ be a field of characteristic not $2$. Let $G$ be a connected reductive $k$-group whose Dynkin diagram contains connected components only of type $A_n$, $B_n$ or $C_n$. Then Serre's question has a positive answer for $G$.
\end{theorem}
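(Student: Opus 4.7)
The plan is a twofold dévissage that reduces the statement for a general reductive $k$-group $G$ to the case of an absolutely simple simply connected classical group, where Jodi Black's theorem \cite{JB} applies; the norm principles of Barquero and Merkurjev will serve as the glue between the successive reductions. Throughout, I will repeatedly use the standard technique of twisting by a cocycle to translate fibres of cohomology maps, together with the coprimality $\gcd_i d_i = 1$ applied through restriction-corestriction on abelian groups.

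\textbf{Step 1: reductive to semisimple.} Write $G$ as an extension $1 \to G^{\mathrm{der}} \to G \to T \to 1$ with $T$ the cocenter torus. Serre's question holds for $T$ because a class in $\HH^1(k,T)$ trivial after each of a family of coprime-degree extensions is itself trivial, by restriction-corestriction on this abelian group combined with the coprimality of the $d_i$. A twisting argument on the associated long exact sequence of pointed sets then transfers injectivity from $G^{\mathrm{der}}$ to $G$, so it suffices to treat the semisimple case.

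\textbf{Step 2: semisimple to simply connected.} Let $\pi : \widetilde{G} \to G^{\mathrm{der}}$ be the simply connected cover, with central kernel $\mu$ of multiplicative type. Given $\xi,\xi' \in \HH^1(k,G^{\mathrm{der}})$ coinciding after restriction to each $L_i$, their images under the connecting map $\delta : \HH^1(k,G^{\mathrm{der}}) \to \HH^2(k,\mu)$ already agree, once more by restriction-corestriction on the abelian group $\HH^2(k,\mu)$. After twisting, both $\xi$ and $\xi'$ lift through $\pi_*$ to classes in $\HH^1(k,\widetilde{G})$. The crucial point is to arrange such lifts that remain compatible over each $L_i$: this is exactly what the norm principle for the isogeny $\pi$ supplies, in the form
\[ N_{L_i/k}\bigl(\mathrm{Image}(\widetilde{G}(L_i) \to G^{\mathrm{der}}(L_i))\bigr) \subset \mathrm{Image}(\widetilde{G}(k) \to G^{\mathrm{der}}(k)). \]
For Dynkin components of type $A_n$, $B_n$ and $C_n$, this norm principle is precisely what Barquero and Merkurjev establish, and the hypothesis of the theorem is chosen so that this input is available. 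The non-trialitarian $D_n$ case is exactly where the scalar obstruction mentioned in the abstract appears, which is why I expect this to be the main obstacle and why it governs the scope of the theorem.

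\textbf{Step 3: simply connected to absolutely simple.} The simply connected semisimple group $\widetilde{G}$ decomposes as a product $\prod_j R_{k_j/k}(H_j)$ with each $H_j$ absolutely simple simply connected over a finite separable extension $k_j/k$, and the Dynkin-type hypothesis is preserved under this decomposition. Shapiro's lemma identifies $\HH^1(k,R_{k_j/k}(H_j))$ with $\HH^1(k_j,H_j)$, compatibly with scalar extension by each $L_i \otimes_k k_j$, whose local degrees still satisfy a coprimality condition. This reduces the question to the absolutely simple simply connected classical case over $k_j$, where Black's theorem \cite{JB} applies directly and concludes the proof.
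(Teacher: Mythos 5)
Your overall strategy---dévissage to absolutely simple simply connected groups, with the Barquero--Merkurjev norm principle as the glue---captures the right intuition, but both of your intermediate reduction steps have gaps that cannot be closed without essentially reproducing the paper's heavier machinery. In Step 1, after observing that the cocenter image $\bar\xi \in \HH^1(k,T)$ is trivial and hence $\xi$ lifts to some $\eta \in \HH^1(k,G^{\mathrm{der}})$, you know only that each restriction $\eta_{L_i}$ lies in the kernel of $\HH^1(L_i,G^{\mathrm{der}})\to \HH^1(L_i,G)$, i.e.\ in the image of the connecting map $\delta_{L_i}\colon T(L_i)\to \HH^1(L_i,G^{\mathrm{der}})$, \emph{not} that $\eta_{L_i}$ is itself trivial. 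SQ for $G^{\mathrm{der}}$ therefore does not apply to $\eta$, and the ``twisting argument'' does not bridge this. The paper handles exactly this difficulty, but in the opposite direction: instead of descending from $G$ to $G^{\mathrm{der}}$, it embeds $G$ into an intermediate group $\tilde G$ sitting in $1\to G\to \tilde G\to P\to 1$ with $P$ a torus, \emph{first} establishes SQ for $\tilde G$ (via the envelope $\hat G$, the central extension $1\to S\to\hat G\to G'^{\,\mathrm{ad}}\to 1$, and Black's theorem applied to the \emph{adjoint} group), and only then applies the weak norm principle for $\tilde G\to P$ together with coprimality, as in Lemma~\ref{SQandnorm}. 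That lemma requires SQ to be already known for the larger group so that the class lifts to a single $k$-rational element $b\in P(k)$; in your Step 1 the larger group is $G$ itself, which would make the argument circular.

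Your Step 2 contains an independent error: the displayed inclusion
\[ N_{L_i/k}\bigl(\mathrm{Image}(\widetilde{G}(L_i) \to G^{\mathrm{der}}(L_i))\bigr) \subset \mathrm{Image}(\widetilde{G}(k) \to G^{\mathrm{der}}(k)) \]
is not meaningful, since $G^{\mathrm{der}}$ is semisimple, hence non-abelian, and there is no norm map $N_{L_i/k}$ on $G^{\mathrm{der}}(L_i)$. The Barquero--Merkurjev norm principle concerns homomorphisms $G\to T$ with $T$ \emph{abelian}---which is precisely why the paper routes the argument through the abelian quotients $P = \tilde G/G$ and, in building the envelope, through the quasi-trivial torus $S$. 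Your Step 3 (Weil restriction and Shapiro) is fine and appears in the paper as well (Lemma~\ref{wlog1}, via $z$-extensions), but it is only productive once the preceding dévissage is set up correctly.
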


We also investigate Serre's question for reductive $k$-groups whose derived subgroups admit quasi-split simply connected covers. More precisely, we give a uniform proof for the following : 

\begin{theorem}
\label{quasisplit-serre}
Let $k$ be a field of characteristic not $2$. Let $G$ be a connected reductive $k$-group whose Dynkin diagram does not contain connected components  of type $E_8$. Assume further that its derived subgroup admits a quasi-split simply connected cover. Then Serre's question has a positive answer for $G$.
\end{theorem}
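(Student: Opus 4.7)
The plan is a two-step cohomological reduction from a general reductive $G$ to the case of quasi-split simply connected semisimple groups. First, I would choose a z-extension $1 \to Z \to H \to G \to 1$ with $Z$ a quasi-trivial central torus and $H$ a connected reductive group whose derived subgroup is the (quasi-split, simply connected) cover of $G^{\mathrm{der}}$. Given $\alpha \in \HH^1(k,G)$ trivial over each $L_i$, its obstruction to lifting to $\HH^1(k,H)$ lies in $\HH^2(k,Z)$, which is a finite product of Brauer groups of \'etale $k$-algebras. Each such Brauer class dies over some $L_i$, and since $\gcd(d_i)=1$ the fact that index divides degree forces this obstruction to vanish. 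Hilbert~90 for the quasi-trivial torus $Z$ then produces a lift $\beta \in \HH^1(k,H)$ still lying in the kernel of the analogous product map for $H$, so it suffices to prove Serre's question for $H$.

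Second, I would pass to the derived subgroup via $1 \to H^{\mathrm{der}} \to H \to S \to 1$, with $S = H/H^{\mathrm{der}}$ a torus. Serre's question holds for tori by the standard $\Cor \circ \res = d_i$ transfer on abelian cohomology, so the image of $\beta$ in $\HH^1(k,S)$ vanishes, and $\beta$ lifts to some $\gamma \in \HH^1(k, H^{\mathrm{der}})$. The fibers of $\HH^1(L_i, H^{\mathrm{der}}) \to \HH^1(L_i, H)$ are controlled by the cokernel of $H(L_i) \to S(L_i)$, so $\gamma_i$ need not vanish a priori. This is precisely where the \emph{norm principle} for $H \to S$ enters: the quasi-split hypothesis on $H^{\mathrm{der}}$ guarantees that $L_i$-norms of elements in the image of $H(L_i) \to S(L_i)$ land in the image of $H(k) \to S(k)$ (via the norm principles of Barquero-Merkurjev in the classical case and their analogues for the quasi-split exceptional types outside $E_8$). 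A coprime-degree adjustment then replaces $\gamma$ by a class in the kernel of the product map for $H^{\mathrm{der}}$.

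At this point $H^{\mathrm{der}}$ is a product of absolutely simple simply connected quasi-split groups of type different from $E_8$, and Serre's question for each such factor is already available in the literature: the classical types are covered by the work of Black \cite{JB} that also underlies Theorem~\ref{mainthm}, while the quasi-split exceptional types outside $E_8$ can be handled via the Rost invariant, whose behavior on such groups is well understood. The hardest step is the second reduction, where non-abelian cohomology rules out a direct transfer argument and one must import the norm principle; the known failure of the relevant norm principle for $E_8$ is precisely what forces the exclusion of $E_8$ from the hypotheses.
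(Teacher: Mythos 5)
The central gap is in your Step 2: you invoke the Barquero--Merkurjev norm principle to control norms of elements in the image of $H(L_i)\to S(L_i)$, but their theorem explicitly \emph{excludes} Dynkin diagrams with components of type $D_n$ ($n\geq 4$), $E_6$ or $E_7$, and these are exactly the types that make Theorem~\ref{quasisplit-serre} non-trivial. Your parenthetical appeal to unspecified ``analogues for the quasi-split exceptional types'' leaves the needed input unestablished (and says nothing about quasi-split $D_n$). What the quasi-split hypothesis actually buys is Lemma~\ref{sur}: a quasi-split simply connected group contains a quasi-trivial maximal torus defined over $k$ (\cite{HS}, Lem.~6.7), and this forces the map $H(L)\to T(L)$ to be \emph{surjective} for every extension $L/k$, so the norm principle for this map holds for the trivial reason that nothing is missed. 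Once you have surjectivity, your Step~2 also simplifies drastically: the connecting map $S(L_i)\to \HH^1(L_i,H^{der})$ is zero, so $\HH^1(L_i,H^{der})\to \HH^1(L_i,H)$ has trivial kernel, your lift $\gamma$ is automatically trivial over each $L_i$, and there is nothing to ``adjust.''

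A secondary issue: even granting a norm principle for $H\to S$, the coprime-degree adjustment you sketch is not clean as written. The fiber of $\HH^1(k,H^{der})\to \HH^1(k,H)$ through $\gamma$ is governed by the \emph{twisted} group ${}_{\gamma}H$, not by a naive $S(k)$-action on $\HH^1(k,H^{der})$, so producing one $s\in S(k)$ that simultaneously trivializes $\gamma$ over all $L_i$ is not a straightforward transfer. The paper sidesteps this by running the reduction in the opposite direction: it builds the envelope $\hat G$ and the intermediate group $\tilde G$, shows $\tilde G$ satisfies $SQ$ via Black's theorem (Theorem~\ref{Jodi}) and Lemma~\ref{quasitrivialext}, shows the norm principle for $\tilde G\to P$ via Lemma~\ref{sur} together with (\cite{BM}, Prop.~5.1), and then applies Lemma~\ref{SQandnorm}, where $G$ is the \emph{kernel} of $\tilde G\to P$ and the obstruction lives in the genuine coset space $P(k)/\pi_k(\tilde G(k))$ where coprimality applies directly. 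Your Step~1 (z-extension, Brauer obstruction, Hilbert~90) matches Lemmas~\ref{quasitrivialext} and~\ref{quasisplit}, and Black's theorem is the right endpoint; the missing bridge is the surjectivity in Lemma~\ref{sur} and the $\tilde G$-framework, or at minimum a correct norm-principle input for quasi-split $D_n$, $E_6$, $E_7$.
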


We relate Serre's question for $G$ with the norm principles of other closely related groups following a series of reductions used previously by Barquero and Merkurjev to prove the norm principles for reductive groups whose Dynkin diagrams do not contain connected components of type $D_n, E_6$ or $E_7$ (\cite{BM}). We also give a scalar obstruction defined up to spinor norms whose vanishing will imply the norm principle for the non-trialitarian $D_{n}$ case and yield a positive answer to Serre's question for connected reductive $k$-groups whose Dynkin diagrams contain components of this type also.

In the next section, we begin with preliminary reductions to restrict ourselves to the case of characteristic $0$ fields and reductive groups $G$ with $G'$ simply connected. In Section 3, we introduce the intermediate groups $\hat{G}$ and $\tilde{G}$ and relate Serre's question for $G$ to Serre's question for $\hat{G}$ and $\tilde{G}$ via the norm principle. In Section 4, we investigate the norm principle for (non-trialitarian) type $D_n$ groups and find the scalar obstruction whose vanishing will imply the norm principle for the non-trialitarian $D_{n}$ case. In the final section, we use the reduction techniques used in Sections \ref{preliminaries} and \ref{GG} to discuss Serre's question for connected reductive $k$-groups whose derived subgroups admit quasi-split simply connected covers.

\section{Preliminaries}
\label{preliminaries}
We work over the base field $k$ of characteristic not $2$ (which we show can be restricted to characteristic $0$). By a $k$-group, we mean a smooth connected linear algebraic group defined over $k$. And mostly, we will restrict ourselves to reductive groups. We say that a $k$-group $G$ satisfies $SQ$ if Serre's question has a positive answer for $G$. 

\subsection{Reduction to characteristic $0$}
Let $G$ be a connected reductive $k$-group whose Dynkin diagram contains connected components only of type $A_n$, $B_n$, $C_n$ or non-trialitarian $D_n$. Without loss of generality we may assume that $k$ is of characteristic $0$ (\cite{GI}, Pg 47). We give a sketch of the reduction argument for the sake of completeness.

Suppose that the characteristic of $k$ is $p >0$. Let $L_1,L_2,\ldots,L_r$ be finite field extensions of $k$ degree $d_1,d_2,\ldots,d_r$ respectively such that $\gcd_i(d_i)=1$ and let $\xi$ be an element in the kernel of 
\[\HH^1(k,G)\to \prod_{i=1}^{r}\HH^1(L_i,G).\] 

By a theorem of Gabber, Liu and Lorenzini (\cite{GLL}, Thm 9.2) which was pointed out to us by O. Wittenberg, we note that any torsor under a smooth group scheme $G/k$ which admits a zero-cycle of degree $1$ also admits a zero-cycle of degree $1$ whose support is \'etale over $k$. Thus without loss of generality we can assume that the given coprime extensions $L_i/k$ are in fact separable.

By (\cite{M}, Thm 1 \& 2), there exists a complete discrete valuation ring $R$ with residue field $k$ and fraction field $K$ of characteristic zero. Let $S_i$ denote corresponding \'etale extensions of $R$ with residue fields $L_i$ and fraction fields $K_i$.

There exists a smooth $R$-group scheme $\tilde{G}$ with special fiber $G$ and connected reductive generic fiber $\tilde{G}_K$. Now given any torsor $t\in \HH^1(k,G)$, there exists a torsor $\tilde{t} \in \HH^1_{et}(R, \tilde{G})$ specializing to $t$ which is unique upto isomorphism. This in turn gives a torsor $\tilde{t}_K$ in $\HH^1(K, \tilde{G}_K)$ by base change, thus defining a map $i_k : \HH^1(k, G)\to \HH^1(K, \tilde{G}_K)$ (\cite{GMS}, Pg 29). It clearly sends the trivial element to the trivial element. The map $i$ also behaves well with the natural restriction maps, i.e., it fits into the following commutative diagram :

\[\begin{tikzcd}
 \HH^1(k, G) \arrow{r}{i_k} \arrow{d} & \HH^1(K, \tilde{G}_K)\arrow{d} \\
  \prod \HH^1(L_i, G) \arrow{r}{\prod i_{L_i}} & \prod \HH^1(K_i, \tilde{G}_{K}).
\end{tikzcd}\]

Let $\tilde{\xi}$ denote the torsor in $\HH^1_{et}(R, \tilde{G})$ corresponding to $\xi$ as above. And let $i_k(\xi) = \tilde{\xi}_K$. This, therefore, is in the kernel of
\[\HH^1(K,\tilde{G}_K)\to \prod_{i=1}^{r}\HH^1(K_i,\tilde{G}_K).\]

Suppose that $\tilde{G}_K$ satisfies $SQ$. Then $\tilde{\xi}_K$ is trivial. However by (\cite{N}), the natural map $\HH^1_{et}(R, \tilde{G})\to \HH^1(K, \tilde{G}_K)$ is injective and hence $\tilde{\xi}$ is trivial in $\HH^1_{et}(R, \tilde{G})$. This implies that its specialization, $\xi$,  is trivial in $ \HH^1(k, G)$.

Thus from here on, we assume that the base field $k$ has characteristic $0$.

\subsection{Lemmata}

\begin{lemma}\label{directproduct}Let $k$-groups $G$ and $H$ satisfy $SQ$. Then $G\times_k H$ also satisfies $SQ$.
\end{lemma}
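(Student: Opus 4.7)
The plan is to exploit the fact that nonabelian Galois cohomology commutes with finite direct products of algebraic groups, so that $SQ$ for the factors decouples into $SQ$ for the product.

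First I would record the natural identification $\HH^1(F, G\times_k H) = \HH^1(F, G) \times \HH^1(F, H)$ for every field extension $F/k$, induced by the two projections $\pi_G : G\times_k H \to G$ and $\pi_H : G\times_k H \to H$. This identification is functorial in $F$, so it is compatible with the restriction maps $\HH^1(k, -) \to \HH^1(L_i, -)$ along any finite extension $L_i/k$.

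Next, given finite extensions $L_1, \ldots, L_r$ of $k$ with $\gcd_i([L_i:k])=1$, and given a class $\xi \in \HH^1(k, G \times_k H)$ lying in the kernel of $\HH^1(k, G \times_k H) \to \prod_i \HH^1(L_i, G\times_k H)$, I would write $\xi$ as a pair $(\xi_G, \xi_H)$ under the identification above. The compatibility with restriction then forces $\xi_G$ to lie in the kernel of $\HH^1(k, G) \to \prod_i \HH^1(L_i, G)$, and similarly $\xi_H$ to lie in the kernel of $\HH^1(k, H) \to \prod_i \HH^1(L_i, H)$.

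Finally, applying the hypothesis that $G$ and $H$ each satisfy $SQ$, I conclude $\xi_G$ and $\xi_H$ are trivial, hence so is $\xi$, giving $SQ$ for $G \times_k H$. There is essentially no obstacle here: the only point worth flagging is that while $\HH^1$ for a nonabelian group is merely a pointed set, the product decomposition $\HH^1(F, G\times H) = \HH^1(F,G)\times \HH^1(F,H)$ still holds as pointed sets because torsors under a product group canonically split as pairs of torsors under the factors, which is what makes the triviality argument go through.
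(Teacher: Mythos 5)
Your proof is correct. It differs slightly in packaging from the paper's: the paper runs a diagram chase along the exact sequence of pointed sets coming from $1\to H\to G\times_k H\xrightarrow{\pi} G\to 1$, using that $\pi(L)$ is surjective on $L$-points (so the connecting map $G(L)\to \HH^1(L,H)$ has trivial image and $\HH^1(L,H)\to \HH^1(L,G\times_k H)$ has trivial kernel), whereas you invoke the cleaner functorial identification $\HH^1(F,G\times_k H)=\HH^1(F,G)\times\HH^1(F,H)$ of pointed sets, which decouples the two factors outright. Both rest on the same underlying split structure; your route avoids the diagram chase entirely and, since the decomposition is an honest isomorphism of pointed sets rather than just an exact sequence, it sidesteps the usual delicacies (``trivial kernel does not imply injective'') that one must otherwise be careful about in nonabelian cohomology. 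Either argument suffices; yours is marginally more transparent.
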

\begin{proof}
Consider the exact sequence $1\to H\to G\times_k H\xrightarrow{\pi} G\to 1$ of algebraic groups. Note that the projection map is surjective at all field points, ie, $\pi(L) : G\times_k H(L)\to G(L)$ is surjective for all fields $L/k$. Thus $1\to \HH^1(L,H)\to \HH^1(L_i,G\times_k H)$ is exact. Then a chase of the following diagram yields a proof of the lemma.

%
%

\[\begin{tikzcd}
 1\arrow{r} & \HH^1(k, H)\arrow{r} \arrow{d} & \HH^1(k,G\times_k H) \arrow{r} \arrow{d} &\HH^1(k, G) \arrow{d} \\ 
1 \arrow{r} &\prod \HH^1(L_i, H) \arrow{r} & \prod \HH^1(L_i, G\times_k H) \arrow{r}{\prod \delta_{L_i}} & \prod \HH^1(L_i, G)
\end{tikzcd}\]

\end{proof}

\begin{lemma}\label{quasitrivialext}
Let $1\to Q\to H\to G\to 1$ be a central extension of a $k$-group $G$ by a quasi-trivial torus $Q$. Then $H$ satisfies $SQ$ if and only if $G$ satisfies $SQ$.
\end{lemma}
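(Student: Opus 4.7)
The plan is to exploit the six-term Galois cohomology sequence attached to the central extension: for any field extension $L/k$,
\[
\cdots \to \HH^1(L,Q) \to \HH^1(L,H) \to \HH^1(L,G) \xrightarrow{\partial_L} \HH^2(L,Q).
\]
Because $Q$ is quasi-trivial, writing $Q = \prod_j R_{E_j/k}\mbb{G}_m$ and applying Shapiro's lemma together with Hilbert~$90$ gives $\HH^1(L,Q) = 0$ for every such $L$, so $\HH^1(L,H) \to \HH^1(L,G)$ has trivial kernel at the level of pointed sets. This is the only general cohomological input the argument will need.

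The direction ``$G$ satisfies $SQ \Rightarrow H$ satisfies $SQ$'' will then be immediate. Given $\xi \in \HH^1(k,H)$ in the kernel of restriction to $\prod_i \HH^1(L_i,H)$, its image $\bar\xi \in \HH^1(k,G)$ also dies over each $L_i$, so $SQ$ for $G$ kills $\bar\xi$, and the kernel-triviality observation over $k$ then forces $\xi$ itself to be trivial.

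For the converse, I would start from $\bar\xi \in \HH^1(k,G)$ in the kernel of restriction to $\prod_i \HH^1(L_i,G)$ and try to lift it to $\HH^1(k,H)$. The obstruction to such a lift is $\partial_k(\bar\xi) \in \HH^2(k,Q) = \prod_j \Br(E_j)$. Since $\bar\xi|_{L_i} = 1$, one has $\res_{L_i/k}\partial_k(\bar\xi) = \partial_{L_i}(\bar\xi|_{L_i}) = 0$, and the identity $\Cor_{L_i/k}\circ\res_{L_i/k} = d_i$ then forces $d_i\cdot\partial_k(\bar\xi) = 0$ for each $i$. As $\HH^2(k,Q)$ is a product of Brauer groups, hence torsion, and $\gcd_i(d_i) = 1$, this yields $\partial_k(\bar\xi) = 0$, so a lift $\xi \in \HH^1(k,H)$ exists. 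Each restriction $\xi|_{L_i}$ lies in the kernel of $\HH^1(L_i,H)\to \HH^1(L_i,G)$ and is therefore trivial; $SQ$ for $H$ applied to $\xi$ then forces $\xi$, and hence $\bar\xi$, to be trivial.

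The one step requiring genuine care is the vanishing of the obstruction $\partial_k(\bar\xi)$, and this is exactly where the quasi-triviality of $Q$ does work beyond its role in $\HH^1$: it realises $\HH^2(k,Q)$ as a product of Brauer groups of \'etale $k$-algebras, ensuring the torsion property against which the standard restriction-corestriction trick for coprime degrees is effective.
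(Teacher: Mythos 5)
Your proposal is correct and follows essentially the same route as the paper: the long exact sequence attached to the central extension, vanishing of $\HH^1(L,Q)$ by quasi-triviality for the easy direction, and the restriction--corestriction argument on the connecting map into $\HH^2(k,Q)$ for the converse. One small remark: the torsion property of $\HH^2(k,Q)$ you invoke is not actually needed --- once $d_i\cdot\partial_k(\bar\xi)=0$ for coprime $d_i$, B\'ezout's identity in the abelian group $\HH^2(k,Q)$ already gives $\partial_k(\bar\xi)=0$, which is also how the paper phrases it.
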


\begin{proof} Since $Q$ is quasi-trivial, $\HH^1(L,Q)=\{1\}\ \forall \  L/k$. From the long exact sequence in cohomology, we have the following commutative diagram.

\[\begin{tikzcd}
 1\arrow{r} & \HH^1(k, H)\arrow{r} \arrow{d} & \HH^1(k,G) \arrow{r}{\delta_k} \arrow{d} & H^2(k,Q) \arrow{d} \\ 
1 \arrow{r} &\prod \HH^1(L_i, H) \arrow{r} & \prod \HH^1(L_i, G) \arrow{r}{\prod \delta_{L_i}} & \prod H^2(L_i, Q)\end{tikzcd}\]

From the above diagram, it is clear that if $G$ satisfies $SQ$, so does $H$. 

Conversely assume that $H$ satisfies $SQ$. Let $a\in \HH^1(k,G)$ become trivial in $\prod \HH^1(L_i,G)$. Then $\delta_k(a)$ becomes trivial in each $H^2(L_i,Q)$. Hence the corestriction $\Cor_{L_i/k}\left(\delta_k(a)\right)={\delta_k(a)}^{d_i}$ becomes trivial in $H^2(k,Q)$ which implies that $\delta_k(a)$ is itself trivial in $H^2(k,Q)$. Therefore $a$ comes from an element $b\in \HH^1(k,H)$ which is trivial in $\prod \HH^1(L_i,H)$. (The fact that $\HH^1(L_i,Q)=\{1\}$ guarentees $b$ is trivial in $\HH^1(L_i,H)$). Since $H$ satisfies $SQ$ by assumption, $b$ is trivial in $\HH^1(k,H)$ which implies the triviality of $a$ in $\HH^1(k,G)$.

\end{proof}

\subsection{Further reductions : z-extensions}
Recall that there is a central extension (called a $z$-extension) $1\to Q\to H\to G\to 1$ of $G$ by a quasitrivial torus $Q$ such that $H'$ is semisimple and simply connected (\cite{MS}, Prop 3.1 and \cite{BB}, Lem 1.1.4). Thus by Lemma \ref{quasitrivialext}, our given reductive group $G$ satisfies $SQ$ if it's $z$-extension $H$ does. That is,

\begin{lemma}
\label{wlog1}
Let $G_1$ be a connected reductive $k$-group such that that $G_1'=R_{E/k}(H')$ where $E/k$ is a separable field extension and $H'$ is an absolutely simple simply connected group (whose Dynkin diagram contains only connected components of classical type $A_n$, $B_n$, $C_n$ or non-trialitarian $D_n$) over $E$. If every such $G_1$ satisfies $SQ$, then so does any connected reductive $k$-group $G$ whose Dynkin diagram contains only connected components of classical type $A_n$, $B_n$, $C_n$ or non-trialitarian $D_n$.
\end{lemma}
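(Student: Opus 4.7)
The plan is a two-step reduction from an arbitrary connected reductive $G$ with classical Dynkin components to a $G_1$ whose derived subgroup is a single Weil restriction $R_{E/k}(H')$. First, the $z$-extension recalled immediately above the lemma gives an exact sequence $1 \to Q \to H \to G \to 1$ with $Q$ a quasi-trivial torus and $H^{\text{der}}$ semisimple and simply connected; Lemma \ref{quasitrivialext} transfers $SQ$ between $G$ and $H$, so we may assume from the outset that $G^{\text{der}}$ is itself semisimple and simply connected.

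The structure theorem for semisimple simply connected groups then writes $G^{\text{der}} = \prod_{i=1}^n R_{E_i/k}(H'_i)$, each $H'_i$ absolutely simple simply connected over a separable extension $E_i/k$ and of the required classical type. For each $i$, the product $K_i := \prod_{j \neq i} R_{E_j/k}(H'_j)$ is characteristic in $G^{\text{der}}$ and hence normal in $G$, and the quotient $G^{(i)} := G/K_i$ is a connected reductive $k$-group with $(G^{(i)})^{\text{der}} = R_{E_i/k}(H'_i)$. Thus each $G^{(i)}$ is of the form $G_1$ and satisfies $SQ$ by hypothesis; by Lemma \ref{directproduct}, the product $\tilde{G} := \prod_i G^{(i)}$ also satisfies $SQ$. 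The diagonal morphism $G \to \tilde{G}$ is a closed immersion with kernel $\bigcap_i K_i = 1$, fitting into a short exact sequence $1 \to G \to \tilde{G} \to T^{n-1} \to 1$, where $T := G/G^{\text{der}}$.

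Finally, any $\xi \in H^1(k, G)$ trivial over each $L_j$ has image in $H^1(k, \tilde{G})$ killed by $SQ$ for $\tilde{G}$, so $\xi$ arises via the connecting map from some $t \in T^{n-1}(k)$. The local conditions at the $L_j$ ensure that $t$ is locally in the image of $\tilde{G} \to T^{n-1}$, and the coprimality of the degrees $[L_j : k]$ combined with a norm-principle-type statement for this torus quotient would show that $t$ is already in the image of $\tilde{G}(k) \to T^{n-1}(k)$, forcing $\xi = 1$. The main obstacle is this final norm-principle input, especially in the non-trialitarian $D_n$ case where the norm principle is exactly what Section 4 is concerned with. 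A cleaner alternative I would try first is to arrange the $z$-extension in the first step so that $H$ itself decomposes as a direct product of $G_1$-type groups---by building $H$ factor-by-factor from a compatible quasi-trivial resolution of the radical of $G$---after which Lemma \ref{directproduct} finishes the proof immediately and bypasses the cohomological chase altogether.
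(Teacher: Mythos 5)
Your first step—pass to a $z$-extension $1\to Q\to H\to G\to 1$ with $H'$ semisimple simply connected, then invoke Lemma \ref{quasitrivialext}—is exactly the paper's entire justification for this lemma (the argument is given in the paragraph immediately preceding the statement, and no further proof is offered). So you have reproduced what the paper actually does, and you are right to be uneasy: that argument only reduces to $G'$ semisimple and simply connected, i.e.\ $G' \cong \prod_i R_{E_i/k}(H'_i)$, not to a \emph{single} quasi-simple factor $R_{E/k}(H')$ as the lemma's hypothesis requires. The paper is silent on this second reduction, and everything downstream (the list of envelopes, Section 3.4) is written under the single-factor assumption.

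Where your proposal runs into trouble is in the attempt to supply that missing step. The diagonal embedding $G\hookrightarrow \prod_i G^{(i)}$ and the resulting cohomology chase are fine as far as they go, but to conclude you need a weak norm principle for the surjection $\prod_i G^{(i)}\to \prod_i G^{(i)}/G$, and this is a norm principle for a reductive group whose Dynkin diagram has the same list of types as $G$. For types $A$, $B$, $C$ this is Barquero--Merkurjev, but then you are invoking the very theorem the paper is reducing to, so the lemma loses its point as an independent preliminary reduction; and for non-trialitarian $D_n$ the required norm principle is exactly the open problem of Section 4, so the argument is circular precisely in the case where Lemma \ref{wlog1}'s conclusion is supposed to include $D_n$. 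You flag this yourself, which is the right instinct. Your proposed alternative---choose the $z$-extension so that $H$ decomposes as a direct product of single-factor reductive groups---does not work in general: even when $G'$ is already simply connected (so $H=G$), $G$ need not factor through its simple components (e.g.\ $(\SL_2\times\SL_2\times\mathbb{G}_m)/\mu_2$ with $\mu_2$ embedded diagonally), and there is no room to rechoose $H$. So neither route closes the gap, and as written the lemma's reduction to a single Weil-restriction factor is not established either by the paper or by your proposal; what the $z$-extension honestly buys is only the reduction to $G'$ simply connected, and one must then run the envelope / $\tilde{G}$ machinery of Section 3 with product envelopes $\prod_i R_{E_i/k}(\hat{H}_i)$ rather than first collapsing to one factor.
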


Thus, without loss of generality, we assume from now on that $G$ is a connected reductive $k$-group with $G' = R_{E/k}(H')$ where $H'$ is one of the following groups : 

\begin{itemize}
\item[$\phantom{.}^1A_{n-1}$ :]
The special linear group $\SL_1(A)$ where $A$ is a central simple algebra of degree $n$ over $E$. 
\item[$\phantom{.}^2A_{n-1}$ :]
The special unitary group $\SU (B,\tau)$ where $B$ is a central simple algebra of degree $n$ over a quadratic extension $F$ of $E$ with a unitary involution $\tau$. 
\item[$B_n$ :]
The spinor group $\Spin (V,q)$ where $(V,q)$ is a non-degenerate quadratic space over $E$ of dimension $2n+1$.
\item[$C_n$ :]
The symplectic group $\Sp (A,\sigma)$ where $A$ is a central simple algebra of degree $2n$ over $E$ with symplectic involution $\sigma$.
\item[$D_n$ :]
(non-trialitarian) The spinor group $\Spin (A,\sigma)$ where $A$ is a central simple algebra of degree $2n$ over $E$ and $\sigma$ is an orthogonal involution.
\end{itemize}

\label{wlogGshape}

\section{Serre's question and norm principles}
\label{GG}
\subsection{Intermediate groups $\hat{G}$ and $\tilde{G}$}
\label{GGtilde}
\textit{Notations are as in Section 5 of (\cite{BM})}

In this section we work with the reductive $k$-group $G$ as assumed after Lemma \ref{wlog1} in the previous section, further assuming\footnote{Note that this condition is more restrictive than what was deduced in Lemma \ref{wlog1}. These restrictions will be removed in the Section \ref{notabssimple}} that its semisimple part $G'$ is an \textit{absolutely simple simply connected group of classical type $A_n$, $B_n$, $C_n$ or $D_n$}. Let $Z(G)=T$ and $Z(G')=\mu$.

Let $\rho : \mu\hookrightarrow S$ be an embedding of $\mu$ into a quasi-trivial torus $S$. Let $e(G',\rho)$ denote the cofibre product $\hat{G}= \frac{G'\times S}{\mu}$. We call $e(G',\rho)$ to be an \textit{envelope}  of $G'$.

\[
\begin{tikzcd}
   \mu \arrow{r}{\delta} \arrow{d}{\rho} & G' \arrow{d} \\
       S \arrow{r}{\gamma}      & \hat{G} 
\end{tikzcd}
\]

Depending on the type of $G'$, we choose envelopes $\hat{G}=e(G', \rho)$ given by the list below : 
\begin{itemize}
\item[$\phantom{,}^1 A_{n-1}$ :]
$S={G}_m$, $G'=\SL_1(A)$ and $\hat{G}=\GL_1(A)$ where $A$ is a central simple algebra of degree $n$ over $k$.
\item[$\phantom{,}^2 A_{n-1}$ :]
$S=R_{K/k}\mbb{G}_m$, $G'=\SU(B,\tau)$ and $\hat{G}=\GU (B,\tau)$ where $B$ is a central simple algebra of degree $n$ over a quadratic extension $K$ of $k$ with a unitary involution $\tau$. 
\item[$B_n$ :]
$S=\mbb{G}_m$, $G'=\Spin (V,q)$ and $\hat{G}=\Gamma^{+}(V,q)$ where $(V,q)$ is a non-degenerate quadratic space over $k$ of dimension $2n+1$.
\item[$C_n$ :]
$S=\mbb{G}_m$, $G'=\Sp (A,\sigma)$ and $\hat{G}=\GSp (A,\sigma)$ where $A$ is a central simple algebra of degree $2n$ over $k$ with symplectic involution $\sigma$.
\item[$D_n$ :]
(non-trialitarian) $S=R_{Z/k}\mbb{G}_m$, $G'=\Spin (A,\sigma)$ and $\hat{G} = \Omega(A,\sigma)$ where $A$ is a central simple algebra of degree $2n$ over $k$, $Z/k$ the discriminant quadratic extension and $\sigma$ is an orthogonal involution.
\end{itemize}

For each of the above cases, $S=Z(\hat{G})$ and $\hat{G}$ fit into an exact sequence as follows :

\[1\to S\to \hat{G} \to G'^{\ ad}\to 1.\]

Here $G'^{\ ad}$ corresponds to the adjoint group of $G'$. By the following theorem (\cite{JB}, Thm 0.2) we know that $G'^{\ ad}$ satisfies $SQ$ for $G'$ as above.
\vspace*{3mm}\\
\begin{theorem}[Jodi Black]
\label{Jodi}
Let $k$ be a field of characteristic different from 2 and let $G''$ be an absolutely simple algebraic $k$- group which is not of type $E_8$ and which is either a simply connected or adjoint classical group or a quasisplit exceptional group. Then Serre's question has a positive answer for $G''$.
\end{theorem}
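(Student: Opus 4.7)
The plan is to prove this theorem by a type-by-type analysis following the Killing--Cartan classification of $G''$, in each case interpreting $\HH^1(k,G'')$ as a pointed set of isomorphism classes of classical algebraic objects and thereby reducing Serre's question for $G''$ to an injectivity statement for those objects under scalar extension to a family of coprime extensions.

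For the classical simply connected and adjoint cases I would invoke the standard cohomological dictionary. For type $^1A_{n-1}$, one has $\HH^1(k,\SL_1(A))\cong k^{\times}/\mathrm{Nrd}(A^{\times})$ and $\HH^1(k,\PGL_1(A))$ is a set of Brauer classes of degree $n$; the classical divisibility ``index divides splitting degree'' together with a corestriction argument for reduced norms gives the result. For the unitary type $^2A_{n-1}$ and the symplectic type $C_n$, $\HH^1$ classifies hermitian (resp.\ skew-hermitian) forms over an algebra with involution, and the Bayer--Lenstra theorem supplies exactly the injectivity needed under coprime extensions. For type $B_n$, $\HH^1(k,\Spin(V,q))$ parametrises non-degenerate quadratic forms with the same discriminant and Clifford invariant as $q$, and Springer's odd-degree theorem combined with the usual corestriction trick to handle even $d_i$ (via the center $\mu_2$) yields the statement.

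The most delicate step, and where I expect the main obstacle, is the type $D_n$ case. Here the center of $\Spin(A,\sigma)$ is larger, both the discriminant quadratic extension and the Brauer class of the Clifford algebra intervene, and $\HH^1$ is no longer governed by a single quadratic-form invariant. To show that two orthogonal involutions that become similar over each $L_i$ are already similar over $k$, one must combine Springer-type descent for the underlying quadratic data with a separate descent argument for the discriminant field and for the Clifford class in $\Br(k)$. This is precisely what forces the exclusion of outer trialitarian forms and makes the $D_n$ verification more intricate than the other classical types.

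For the quasi-split exceptional cases I would appeal to cohomological invariants. The set $\HH^1(k,G_2)$ classifies octonion algebras via an associated $3$-Pfister form, reducing back to Springer's theorem; $\HH^1(k,F_4)$ classifies Albert algebras, which are determined by their invariants $f_3$, $f_5$, $g_3$; and for quasi-split $E_6$ and $E_7$ one exploits the Rost invariant together with a few lower-degree invariants. In each situation the invariants land in Galois cohomology groups where corestriction kills the obstruction for coprime $d_i$, giving injectivity. The group $E_8$ is excluded because injectivity of the Rost invariant on $\HH^1$ is not known there, so the same uniform strategy breaks down.
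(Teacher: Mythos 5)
This statement is not proved in the paper at all: Theorem~\ref{Jodi} is quoted verbatim as Theorem~0.2 of Jodi Black's paper \cite{JB} and is used as a black-box input to the subsequent reductions (in particular to establish that the envelopes $\hat{G}$ satisfy $SQ$ via Lemma~\ref{quasitrivialext}). There is therefore no proof in this paper to compare your sketch against, and it would be circular for the present paper to re-derive it, since the whole point of Sections~\ref{preliminaries}--\ref{GG} is to bootstrap from the absolutely simple simply connected and adjoint cases settled in \cite{JB} up to general reductive groups.

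As a description of what one would find in \cite{JB}, your outline is in the right spirit: Black does argue type by type using the classification of $\HH^1$ in terms of algebras with involution (following the Book of Involutions), combining Springer's theorem, the Bayer--Lenstra theorem, and restriction--corestriction arguments for the classical types, and cohomological invariants (including the Rost invariant) for the quasi-split exceptional types, with $E_8$ excluded for exactly the reason you give. A few of your details are loose and would need repair before this became a proof. For type $B_n$ it is not accurate that $\HH^1(k,\Spin(V,q))$ classifies quadratic forms with prescribed discriminant and Clifford invariant --- that is closer to the description of $\HH^1(k,\SO(q))$ --- and one must genuinely track the spinor-norm obstruction coming from the central $\mu_2$. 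For type $D_n$ your paragraph identifies the right difficulties (discriminant extension, Clifford algebra class, the larger center) but is only a statement of the problem, not an argument; this is in fact where \cite{JB} does the most work, and also where the present paper has to leave a gap (the spinor-norm obstruction of Section~4) when it pushes beyond the absolutely simple setting. Finally, the trialitarian forms of $D_4$ are excluded not because the argument ``becomes more intricate'' but because they are not classical groups in the sense of the theorem's hypothesis, so they are simply outside its scope.

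So: your sketch is a reasonable gloss on Black's strategy, but it is a proof of a cited external theorem, not of anything argued in this paper, and several of its steps are asserted at a level of detail that would not survive contact with the actual cocycle computations.
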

\bigskip
Thus, for connected reductive groups $G$, with $G'$ absolutely simple and simply connected and for envelopes $\hat{G}$ chosen as above, Lemma \ref{quasitrivialext} implies that \textbf{the envelopes $\hat{G}$ satisfy $SQ$}.

Define an intermediate abelian group $\tilde{T}$ to be the cofibre product $\frac{T\times S}{\mu}$.

\[
\begin{tikzcd}
    \mu \arrow{r} \arrow{d}{\rho} & T \arrow{d}{\alpha} \\
        S \arrow{r}{\nu}      & \tilde{T} 
\end{tikzcd}
\]

Let the algebraic group $\tilde{G}$ be the cofibre product defined by the following diagram : 
\[\begin{tikzcd}
    G'\times T \arrow{r}{m} \arrow{d}{id\ \times \alpha} & G \arrow{d}{\beta} \\
    G'\times \tilde{T} \arrow{r}{\epsilon}     & \tilde{G}.
    \end{tikzcd}
\]

Then we have the following commutative diagram with exact rows (Prop 5.1, \cite{BM}) . Note that each row is a central extension of $\tilde{G}$. 

\[\begin{tikzcd}
 1 \arrow{r} & \mu \arrow{r}{\delta, \nu\rho} \arrow{d}{\rho}  & G'\times \tilde{T}\arrow{r}{\epsilon}\arrow{d} & \tilde{G}\arrow{r}\arrow{d}{id}  & 1 & \hspace*{5mm} (*) \\ 
1 \arrow{r} & S \arrow{r}{\gamma,\nu} & \hat{G}\times \tilde{T} \arrow{r} & \tilde{G}  \arrow{r} & 1 & \hspace*{5mm} (**)\end{tikzcd} \]

Since $\tilde{T}$ is abelian, the existence of the co-restriction map shows that $\tilde{T}$ satisfies $SQ$. Since $\hat{G}$ satisfies $SQ$, we can apply Lemmas \ref{directproduct} and \ref{quasitrivialext} to (**) to see that \textbf{$\tilde{G}$ satisfies $SQ$}.

\subsection{Norm principle and weak norm principle}

Let $f:G\to T$ be a map of $k$-groups where $T$ is an abelian $k$-group. Then we have norm maps $N_{L,k} : T(L)\to T(k)$ for any separable field extension $L/k$. 

\[\begin{tikzcd}
    G(L) \arrow{r}{f(L)}  & T(L) \arrow{d}{{N_{L/k}}} \\
    G(k) \arrow{r}{f(k)}       & T(k)
    \end{tikzcd}
    \]
    
We say that the \textit{norm principle} holds for $f:G\to T$ if for all separable field extensions $L/k$, \[N_{L/k}(\Image f(L)) \subseteq \Image f(k).\]

Note that the norm principle holds for any algebraic group homomorphism between abelian groups.

That is, we say that the \textit{norm principle} holds for $f: G\to T$ if given any separable field extension $L/k$ and any $t\in T(L)$ such that $t\in \brac{\Image f(L) : G(L)\to T(L)}$, then $N_{L/k}(t)\in \brac{\Image f(k) : G(k)\to T(k)}$.

We say that the \textit{weak norm principle} holds for $f: G\to T$ if given any $t\in T(k)$ such that $t\in \brac{\Image f(L) : G(L)\to T(L)}$, then $t^{[L:k]}=N_{L/k}(t)\in \brac{\Image f(k) : G(k)\to T(k)}$.

 It is clear that if the norm principle holds for $f$, then so does the weak norm principle. 

\begin{lemma}\label{comp1}Let $G,T,S$ be $k$-groups with $S,T$ abelian and $f:G\to T$, $h: T\hookrightarrow S$ be two $k$-group maps with $h$ injective. Then the (weak) norm principle holds for $f: G\to T$ if the (weak) norm principle holds for $h\circ f : G\to S$. 
\end{lemma}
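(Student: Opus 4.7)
The plan is a short diagram chase using naturality of the norm under homomorphisms of abelian $k$-groups together with the fact that an injective morphism of $k$-group schemes induces an injection on rational points. First I would fix a separable extension $L/k$ and an element $t\in T(L)$ lying in the image of $f(L):G(L)\to T(L)$, so $t=f(L)(g)$ for some $g\in G(L)$. Applying $h(L)$ yields $h(L)(t)=(h\circ f)(L)(g)$, which lies in the image of $(h\circ f)(L)$. The hypothesized norm principle for $h\circ f$ then produces an element $g'\in G(k)$ such that $(h\circ f)(k)(g')=N_{L/k}(h(L)(t))$.

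The key identification I would use is that $N_{L/k}\circ h(L) = h(k)\circ N_{L/k}$, which is the standard functoriality of the norm map with respect to any morphism of abelian $k$-groups. Rewriting the preceding equation yields $h(k)(f(k)(g'))=h(k)(N_{L/k}(t))$. Because $h:T\hookrightarrow S$ is an injective morphism of $k$-group schemes (i.e.\ a monomorphism of group schemes), the induced map $h(k):T(k)\to S(k)$ on rational points is injective, so cancellation gives $N_{L/k}(t)=f(k)(g')\in \Image f(k)$, which establishes the norm principle for $f$.

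For the weak norm principle I would run the identical chase, but starting from $t\in T(k)$ whose image in $T(L)$ lies in $\Image f(L)$; the relation $N_{L/k}(t)=t^{[L:k]}$ (valid because $T$ is abelian) then converts the output into the desired membership $t^{[L:k]}\in \Image f(k)$. There is no genuine obstacle in this argument; the only step meriting a brief remark is the commutation $N_{L/k}\circ h(L)=h(k)\circ N_{L/k}$, which is automatic from the definition of the norm on abelian $k$-group schemes and the naturality of the trace/corestriction construction.
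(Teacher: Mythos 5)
Your proposal is correct and takes essentially the same approach as the paper: push $t$ forward to $S(L)$ via $h$, apply the hypothesis for $h\circ f$ to produce a preimage $g'\in G(k)$, and then use injectivity of $h(k):T(k)\to S(k)$ together with the commutation of $h$ with norms to identify $N_{L/k}(t)$ with $f(k)(g')$. The paper's proof performs exactly this chase (it phrases the commutation step as ``$h$ maps both $N_{L/k}(t)$ and $\theta=f(k)(g')$ to $N_{L/k}(s)$'') and likewise defers the weak version to an analogous argument.
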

\begin{proof} 
Let us show the statement for the norm principles. Let $t\in T(L)$ such that $f(L): G(L)\to T(L)$ maps $g\leadsto t$. Let $h(L)(t)=s\in S(L)$. Thus $h\circ f(L)(g) = s$. Since the norm principle holds for $h\circ f$, there exists a $g'\in G(k)$ so that $h\circ f(k)(g') = \N_{L/k}(s)$. 

Let $\theta = f(k)(g')\in T(k)$. Then $h: T(k)\to S(k)$ maps both $\N_{L/k}(t)$ and $\theta$ to $\N_{L/k}(s)$. As $h$ is injective, we get that $\N_{L/k}(t) = \theta\in \brac{\Image f(k): G(k)\to T(k)}$. The corresponding statement for the weak norm principles follows from a similar proof.
\end{proof}

\newpage

\subsection{Relating Serre's question and norm principle}
The deduction of SQ for $G$ from $\hat{G}$ and $\tilde{G}$ follows via the (weak) norm principles.

Let $\beta : G\to \tilde{G}$ be the embedding of $k$-groups with the cokernel $P$ isomorphic to the torus $\frac{S}{\mu}$ where $\tilde{G}$ and $G$ are as in Section \ref{GGtilde}. Thus we have the following exact sequence : \[ 1\to G\xrightarrow{\beta} \tilde{G} \xrightarrow{\pi} P\to 1.\]

\begin{lemma} If the weak norm principle holds for $\pi : \tilde{G} \to P$, then $G$ satisfies $SQ$.
\label{SQandnorm}
\end{lemma}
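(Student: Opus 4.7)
The plan is to exploit the long exact sequence in nonabelian cohomology associated to $1 \to G \xrightarrow{\beta} \tilde{G} \xrightarrow{\pi} P \to 1$, together with the two facts already in hand from Section \ref{GGtilde}: that $\tilde{G}$ satisfies $SQ$, and that $P \cong S/\mu$ is a torus (hence commutative).

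First I would take a class $\xi \in \HH^1(k, G)$ lying in the kernel of the diagonal restriction to $\prod \HH^1(L_i, G)$ and push it forward via $\beta^*$ to $\HH^1(k, \tilde G)$. By functoriality, $\beta^*(\xi)$ becomes trivial in each $\HH^1(L_i, \tilde G)$, and since $\tilde G$ satisfies $SQ$ it is already trivial over $k$. By exactness there exists $p \in P(k)$ with connecting map value $\delta_k(p) = \xi$.

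Next, looking at the corresponding sequence over each $L_i$, the vanishing of $\xi|_{L_i}$ in $\HH^1(L_i, G)$ is equivalent to $p|_{L_i}$ lying in $\pi_{L_i}\!\left(\tilde G(L_i)\right) \subseteq P(L_i)$. Applying the hypothesis — the weak norm principle for $\pi : \tilde G \to P$ — to the element $p \in P(k) \subseteq P(L_i)$ yields
\[
  p^{d_i} \;=\; \N_{L_i/k}(p|_{L_i}) \;\in\; \pi_k\!\left(\tilde G(k)\right) \quad \text{for every } i.
\]

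Finally I would invoke the coprimality $\gcd(d_i) = 1$: writing $1 = \sum n_i d_i$ and observing that $\pi_k : \tilde G(k) \to P(k)$ is a group homomorphism whose image is therefore a subgroup of the abelian group $P(k)$, one concludes $p = \prod p^{n_i d_i} \in \pi_k(\tilde G(k))$. By exactness this forces $\xi = \delta_k(p)$ to be trivial in $\HH^1(k, G)$, which is what we want.

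I do not expect a serious obstacle here; the only subtlety worth flagging is that $\delta_k$ is merely a map of pointed sets when $G$ is non-abelian, so one cannot directly combine the classes $\delta_k(p^{d_i})$ inside $\HH^1(k,G)$. This is circumvented precisely by carrying out the $\gcd$ manipulation upstairs in the abelian group $P(k)$, where $\pi_k(\tilde G(k))$ genuinely is a subgroup — the role played by the weak (rather than full) norm principle being exactly to deliver the needed $d_i$-th powers into that subgroup.
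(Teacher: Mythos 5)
Your proof is correct and is essentially the argument the paper gives: push $\xi$ to $\HH^1(k,\tilde G)$, use $SQ$ for $\tilde G$ to pull $\xi$ back to some $p \in P(k)$ via the connecting map, apply the weak norm principle over each $L_i$ to get $p^{d_i} \in \pi_k(\tilde G(k))$, and conclude by coprimality inside the abelian group $P(k)$. Your closing remark about why the $\gcd$ manipulation must happen in $P(k)$ rather than in $\HH^1(k,G)$ is a correct and useful observation, though the paper leaves it implicit.
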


\begin{proof}
From the long exact sequence of cohomology, we have the following commutative diagram : 

\begin{frame}
\footnotesize
\arraycolsep=3pt
\resizebox{\linewidth}{!}{$\begin{array}{ccccccccccc}
1 & \to & G(k) & \to & \tilde{G}(k) & \xrightarrow{\pi_k} & P(k) & \xrightarrow{\delta_k} & \HH^1(k,G) & \xrightarrow{\beta_k}  & \HH^1(k, \tilde{G}) \\
 &  & \downarrow &  & \downarrow  &  & \downarrow &  & \downarrow &  & \downarrow  \\
1 & \to & \prod G(L_i) & \to & \prod \tilde{G}(L_i) & \xrightarrow{\prod \pi_{L_i}} & \prod P(L_i) & \xrightarrow{\prod \delta_{L_i}} & \prod \HH^1(L_i,G) & \to & \prod \HH^1(L_i, \tilde{G}).
\end{array}$}
\end{frame}
\phantom{.}
\vspace*{3mm}\\
Recall that $\tilde{G}$ satisfies $SQ$. Let $a\in \HH^1(k,G)$ become trivial in $\prod \HH^1(L_i, G)$. As $\tilde{G}$ satisfies $SQ$, $\beta_k(a)$ becomes trivial in $\HH^1(k,\tilde{G})$. Hence $a=\delta_k(b)$ for some $b\in P(k)$ and $\delta_{L_i}(b)$ is trivial in $\HH^1(L_i,G)$.
\vspace*{2mm}\\
 Therefore, there exist $c_i\in \tilde{G}(L_i)$ such that $\pi_{L_i}(c_i)=b$. Showing that $G$ satisfies $SQ$, ie, that $a$ is trivial, is equivalent to showing $b\in \brac{\Image \pi_k: \tilde{G}(k)\to P(k)}$. However $b\in \brac{\Image \pi_{L_i} : \tilde{G}(L_i)\to P(L_i)}$. Since the weak norm principle holds for $\pi : \tilde{G}\to P$, $b^{d_i}\in \Image \brac{\pi_k: \tilde{G}(k)\to P(k)}$ where $[L_i:k]=d_i$ for each $i$. As $\gcd_i(d_i)=1$, this means $b\in \Image \brac{\pi_{k}: \tilde{G}(k)\to P(k)}$.
\end{proof}

\subsection{Serre's question for $G$ with $G'$ not absolutely simple}
\label{notabssimple}

As assumed after Lemma \ref{wlog1}, let $G$ now be reductive with $G'=R_{E/k}(H')$ where $H'$ is an absolutely simple simply connected group of classical type over $E$. Let $H$ be an envelope listed before of $H'$ .  Observe that $R_{E/k}(H)$ is an envelope of $G'$ (\cite{BM}).

$H$ satisfies $SQ$ because it is fits into an exact sequence 
\[1\to \mathrm{quasi-trivial\ torus}\to H\to H'^{\ ad}\to 1\]
Hence $R_{E/k}(H)$ also satisfies $SQ$ because $\HH^1(L, R_{E/k}H) = \HH^1(L\otimes_k E, H)$.

The proof of Lemma \ref{SQandnorm} shows that if some envelope $\hat{G}$ satisfies $SQ$ (which in turn shows that the corresponding $\tilde{G}$ satsifies $SQ$) and the weak norm principle holds for $\tilde{G}\to P$, then $G$ satisfies $SQ$. 
Thus, using the envelope  $R_{E/k}(H)$ for $G$, we have the following :

\begin{lemma}
\label{generalSQandnorm} Let $G$ be any connected reductive $k$-group with $G'$ simply connected whose Dynkin diagram contains only connected components of classical type $A_n$, $B_n$, $C_n$ or non-trialitarian $D_n$ (as assumed after Lemma \ref{wlog1}). If the weak norm principle holds for $\pi : \tilde{G} \to P$, then $G$ satisfies $SQ$.
\end{lemma}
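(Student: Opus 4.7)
The plan is to reduce to the absolutely simple case of Lemma \ref{SQandnorm} by replacing the envelope of Section \ref{GGtilde} with its Weil-restricted analogue. Concretely, I would take $\hat{G} = R_{E/k}(H)$, where $H$ is the envelope of $H'$ listed at the end of Section \ref{preliminaries}; as noted immediately before the statement, this is itself an envelope of $G' = R_{E/k}(H')$. The center $S = Z(\hat{G})$ is then a Weil restriction of the quasi-trivial torus $Z(H)$, hence still quasi-trivial, so the cofibre-product construction of $\tilde{T}$ and $\tilde{G}$ and the two central extensions $(*)$ and $(**)$ from Section \ref{GGtilde} go through without modification.

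The only genuinely new step is to verify that $\hat{G} = R_{E/k}(H)$ satisfies $SQ$. Over $E$, $H$ fits into an exact sequence $1 \to Q \to H \to H'^{\ ad} \to 1$ with $Q$ quasi-trivial, so Theorem \ref{Jodi} combined with Lemma \ref{quasitrivialext} gives $SQ$ for $H/E$. For the Weil restriction, Shapiro's identification $\HH^1(L, R_{E/k}H) = \HH^1(L \otimes_k E, H)$ shows that a class $\xi \in \HH^1(k, \hat{G})$ dying in every $\HH^1(L_i, \hat{G})$ corresponds to a class $\eta \in \HH^1(E, H)$ dying in every $\HH^1(F_{ij}, H)$, where $L_i \otimes_k E = \prod_j F_{ij}$. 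The key observation is that $\gcd_{i,j}[F_{ij}:E]$ divides $[L_i:k] = \sum_j [F_{ij}:E]$ for each $i$, hence divides $\gcd_i [L_i:k] = 1$; so the aggregated collection $\{F_{ij}\}_{i,j}$ has coprime degrees over $E$, and $SQ$ for $H/E$ trivializes $\eta$, hence $\xi$.

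With $SQ$ for $\hat{G}$ in hand, applying Lemmas \ref{directproduct} and \ref{quasitrivialext} to the central extension $(**)$ yields $SQ$ for $\tilde{G}$ (the abelian $\tilde{T}$ satisfies $SQ$ via corestriction, as in Section \ref{GGtilde}). The conclusion is then an unchanged repetition of the diagram chase in Lemma \ref{SQandnorm}: for $a \in \HH^1(k, G)$ in the kernel of $\prod \HH^1(L_i, G)$, $SQ$ for $\tilde{G}$ kills $\beta_k(a)$, producing $b \in P(k)$ with $a = \delta_k(b)$ and $b \in \Image(\pi_{L_i})$ for each $i$; the weak norm principle for $\pi : \tilde{G} \to P$ gives $b^{d_i} \in \Image(\pi_k)$, and $\gcd_i d_i = 1$ forces $b \in \Image(\pi_k)$, so $a$ is trivial. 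The one point requiring genuine care is $SQ$ for the Weil-restricted envelope: individual factors $F_{ij}$ need not have degrees coprime to one another, but the gcd observation above recovers coprimality in aggregate. Everything else is formal.
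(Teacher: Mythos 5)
Your proof follows essentially the same route as the paper: take $\hat{G}=R_{E/k}(H)$ as the envelope, deduce $SQ$ for $\hat{G}$ from $SQ$ for $H$ over $E$ via the Shapiro identification $\HH^1(L,R_{E/k}H)=\HH^1(L\otimes_k E,H)$, pass to $\tilde{G}$ via Lemmas \ref{directproduct} and \ref{quasitrivialext}, and rerun the diagram chase of Lemma \ref{SQandnorm}. The one place you add value is the explicit gcd computation $\gcd_{i,j}[F_{ij}:E]\mid \sum_j[F_{ij}:E]=d_i$, which justifies why the collection $\{F_{ij}\}$ has coprime degrees over $E$; the paper leaves this step implicit.
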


We recall now the norm principle of Merkurjev and Barquero for reductive groups of classical type.

\begin{theorem}[Barquero-Merkurjev, \cite{BM}]
Let $G$ be a reductive group over a field $k$. Assume that the Dynkin diagram of $G$ does not contain connected components $D_n, n \geq 4, E_6$ or $E_7$.  Let $T$ be any commutative $k$-group. Then the norm principle holds for any group homomorphism $G\to T$.
\end{theorem}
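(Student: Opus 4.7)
The plan is to reduce the statement, using the factorization and envelope techniques developed in the preceding sections, to a short list of classical norm principles attached to the standard maps of each Dynkin type, and then to verify each case individually. First I would observe that, since $T$ is commutative, any homomorphism $f : G \to T$ factors through the abelianization $G^{\mathrm{ab}} = G/G'$, which is a torus because $G$ is reductive, and that a direct diagram chase (using only the naturality of $N_{L/k}$: if $N_{L/k}(s) = g(y)$ then $N_{L/k}(j(s)) = j(N_{L/k}(s)) = (j \circ g)(y)$) shows that norm principles are preserved by post-composition with an arbitrary homomorphism of abelian $k$-groups. Consequently it would suffice to establish the norm principle for the single map $G \to G^{\mathrm{ab}}$.

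Next I would carry out the structural reductions, parallel to those already used in Section \ref{preliminaries} for Serre's question. A $z$-extension $1 \to Q \to H \to G \to 1$ with $Q$ quasi-trivial preserves norm principles by a diagram chase analogous to the proof of Lemma \ref{quasitrivialext}, using that $\HH^1$ of a quasi-trivial torus vanishes and that corestriction handles the residual $\HH^2$-obstruction; this reduces to the case of $G'$ semisimple simply connected. Weil restriction, combined with $\HH^i(L, R_{E/k}(\cdot)) = \HH^i(L \otimes_k E, \cdot)$ and the compatibility of norm maps with $R_{E/k}$, would further reduce to the case $G' = H'$ absolutely simple simply connected. At this stage the envelope construction of Section \ref{GGtilde} produces $1 \to G \to \tilde{G} \to P \to 1$ together with an embedding $G^{\mathrm{ab}} \hookrightarrow \tilde{T}$; successive application of Lemma \ref{comp1} then reduces the norm principle for $G \to G^{\mathrm{ab}}$ to the norm principle for one explicit classical homomorphism determined by the type of $G'$.

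The type-by-type verification will be the heart of the proof. For type ${}^1A_{n-1}$ the classical map is the reduced norm $\GL_1(A) \to \mbb{G}_m$ and the required statement is Scharlau's norm principle. For ${}^2A_{n-1}$ it is the unitary multiplier $\GU(B,\tau) \to R_{K/k}\mbb{G}_m$, which can be reduced to the previous case by base change to $K$. For $C_n$ it is the multiplier $\GSp(A,\sigma) \to \mbb{G}_m$, reducing to a reduced-norm statement over a suitable splitting extension. For $B_n$ it is Knebusch's norm principle for the spinor norm $\Gamma^{+}(V,q) \to \mbb{G}_m$, which goes through cleanly in odd dimension where the discriminant is invisible.

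The hard part, and the reason for the hypothesis on the Dynkin diagram, will be that this strategy breaks down for non-trialitarian $D_n$: the target is $R_{Z/k}\mbb{G}_m$ with $Z$ the discriminant quadratic extension, and the spinor-norm image of $\Omega(A,\sigma)$ is controlled by an obstruction class that a Knebusch-style lift does not in general annihilate. Isolating this obstruction up to spinor norms is precisely the subject of Section $4$ of the present paper. The exceptional types $E_6$ and $E_7$ are excluded for a parallel reason: no sufficiently small envelope with a tractable torus quotient is available, and no classical homomorphism analogous to the reduced or spinor norm is yet known to satisfy a norm principle in those cases.
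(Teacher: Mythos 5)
This theorem is stated with explicit attribution to Barquero and Merkurjev and is used in the paper as a citation, not proved there: the text invokes it as a black box immediately after stating it, to conclude that the weak norm principle holds for $\pi:\tilde G\to P$ in types $A_n$, $B_n$, $C_n$. So there is no ``paper's own proof'' to compare against; what you have written is a reconstruction of the argument in \cite{BM} itself.

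As such a reconstruction your outline is broadly faithful to the Barquero--Merkurjev strategy: factor through $G^{\mathrm{ab}}$, reduce by $z$-extension to $G'$ simply connected, reduce by Weil restriction to $G'$ absolutely simple, pass to an envelope, and verify the norm principle for each classical envelope map. Two places deserve more care than the sketch gives. First, the passage from the norm principle for $\tilde G\to\tilde G^{\mathrm{ab}}$ (or for $\hat G$) down to the norm principle for $G\to G^{\mathrm{ab}}$ is not literally ``successive applications of Lemma \ref{comp1}'': that lemma changes the target but keeps the source $G$ fixed, whereas here the source also enlarges to $\tilde G$; one needs the specific cofibre-product diagrams of \cite{BM} (their Prop.~5.1) and the surjectivity of $\hat G(L)\times\tilde T(L)\to\tilde G(L)$ coming from $H^1(L,S)=1$, which you only gesture at. Second, for type ${}^2A_{n-1}$ the claim that the unitary multiplier case ``can be reduced to the previous case by base change to $K$'' is not an argument: the norm principle is a statement about all finite separable $L/k$, and base-changing the group to $K$ changes the ground field, so one must instead use that the image of $\GU(B,\tau)(k)\to K^*$ is described by reduced-norm and hermitian-form conditions that are stable under corestriction. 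The type-by-type inputs you name (reduced-norm norm principle for ${}^1A$, Knebusch's norm principle for the spinor norm in type $B_n$, the multiplier of $\GSp$ for $C_n$) are the right classical facts, and your closing remarks about why $D_n$, $E_6$, $E_7$ are excluded correctly identify the obstruction that Section~4 of this paper is devoted to.
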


This shows that the norm principle and hence the weak norm principle holds for the map $\pi : \tilde{G}\to P$ for reductive $k$-groups $G$ as in the main theorem (Thm $\ref{mainthm}$), concluding the proof for it.

\textbf{Theorem 1.2}
\phantom{--\\}
Let $k$ be a field of characteristic not $2$. Let $G$ be a connected reductive $k$-group whose Dynkin diagram contains connected components only of type $A_n$, $B_n$ or $C_n$. Then Serre's question has a positive answer for $G$.

\section{Obstruction to norm principle for (non-trialitarian) $D_n$}

\subsection{Preliminaries}
Let $\As$ be a central simple algebra of degree $2n$ over $k$ and let $\sigma$ be an orthogonal involution. Let $\C\As$ denote its Clifford algebra which is a central simple algebra over its center, $Z/k$, the discriminant extension. Let $i$ denote the non-trivial automorphism of $Z/k$ and let $\su$ denote the canonical involution of $\C\As$.

Recall that, depending on the parity of $n$, $\su$ is either an involution of the second kind (when $n$ is odd) or of the first kind (when $n$ is even). Let $\muu :\Sim\left(\C\As, \su\right)\to R_{Z/k}\mathbb{G}_m $ denote the multiplier map sending similitude $c$ to  $\su(c)c$.

Let $\Omega\As$ be the \textit{extended Clifford group}, which is an \textit{envelope} of $\Spin\As$ as mentioned before. We recall below the map $\xx : \Omega\As(k)\to Z^*/k^*$ as defined in  (\cite{KMRT}, Pg 182).

Given $\omega\in \Omega\As(k)$, let $g\in \GO^{+}\As(k)$ be some similitude such that $\omega\leadsto gk^*$ under the natural surjection $\Omega\As(k)\to \PGO^+\As(k)$. 

Let $h = \mu(g)^{-1}g^2 \in \Oo^+\As(k)$ and let $\gamma \in \Gamma\As(k)$ be some element in the \textit{special Clifford group} which maps to $h$ under the vector representation $\chi' : \Gamma\As(k)\to \Oo^+\As(k)$. Then $\omega^2 = \gamma z$ for some $z\in Z^*$ and $\xx\left(\omega\right) = zk^*$.

Note that the map $\xx$ has $\Gamma\As(k)$ as kernel. Also if $z\in Z^*$, then $\xx(z)=z^2k^*$.

 By following the reductions in (\cite{BM}), it is easy to see that one needs to investigate whether the norm principle holds for the canonical map \[\Omega\As\to \frac{\Omega\As}{\left[\Omega\As,\Omega\As\right]}.\]

We will need to investigate the norm principle for two different maps depending on the parity of $n$.

\subsubsection*{The map $\mu_*$ for $n$ odd}
Let $U\subset \mathbb{G}_m \times R_{Z/k}\mathbb{G}_m$ be the algebraic subgroup defined by 
\[U(k) = \{(f,z)\in k^*\times Z^*| f^4 = \N_{Z/k}(z)\}.\]

Recall the map $\mu_{*} : \Omega\As\to U$ defined in (\cite{KMRT}, Pg 188) which sends 

\[\omega \leadsto \left(\muu(\omega), ai(a)^{-1}\muu(\omega)^2\right),\]

 where $\omega\in \Omega\As(k) $ and $\xx(\omega)=a \ \ k^*$. This induces the following exact sequence (\cite{KMRT}, Pg 190)
 
\[1\to \Spin\As\to \Omega\As\xrightarrow{\mu_*} U\to 1.\]

Since the semisimple part of $\Omega\As$ is $\Spin\As$, the above exact sequence 
shows that it suffices to check the norm principle for the map ${\mu}_{*}$.

\subsubsection*{The map $\muu$ for $n$ even}
Recall the following exact sequence induced by restricting $\muu$ to $\Omega\As$ (\cite{KMRT}, Pg 187) 

\[1\to \Spin\As\to \Omega\As \xrightarrow{\muu} R_{Z/k}\mathbb{G}_m\to 1.\]

Since the semisimple part of $\Omega\As$ is $\Spin\As$, the above exact sequence 
shows that it suffices to check the norm principle for the map $\muu$.

\subsection{An obstruction to being in the image of $\mu_{*}$ for $n$ odd}
\label{nodd}
Given $(f,z)\in U(k)$, we would like to formulate an obstruction which prevents $(f,z)$ from being in the image $\mu_*\left(\Omega\As(k)\right)$. Note that for $z\in Z^*$, $\mu_*(z) = (\N_{Z/k}(z), z^4)$ and hence the algebraic subgroup $U_0\subseteq U$ defined by 

\[U_0(k) = \{(N_{Z/k}(z), z^4) | z\in Z^*\}\]

has its $k$-points in the image $\mu_*\left(\Omega\As(k)\right)$. 

Let $\mu_{n[Z]}$ denote the kernel of the norm map $R_{K/k}\mu_n \xrightarrow{N} \mu_n$ where $K/k$ is a quadratic extension. Note that $\mu_{4[Z]}$ is the center of $\Spin \As$ as $n$ is odd. Also recall that (\cite{KMRT}, Prop 30.13, Pg 418)

\[\HH^1\left(k, \mu_{4[Z]}\right) \cong \frac{U(k)}{U_0(k)}.\]

Thus, we can construct the map $S : \PGO^{+}\As(k)\to \HH^1\left(k, \mu_{4[Z]}\right)$ induced by the following commutative diagram with exact rows :

\[\begin{tikzcd}
1 \arrow{r} & Z^*\arrow{r} \arrow{d}{\mu_*} & \Omega\As(k) \arrow{r}{\chi'} \arrow{d}{\mu_*} & \PGO^+\As(k)   \arrow{r} \arrow{d}{S}& 1 \\
1 \arrow{r} &  U_0(k) \arrow{r}  & U(k) \arrow{r} &  \HH^1\left(k, \mu_{4[Z]}\right) \arrow{r} & 1
\end{tikzcd}\]

The map $S$ also turns out to be the connecting map from $\PGO^+\As(k)\to \HH^1\left(k, \mu_{4[Z]}\right)$ (\cite{KMRT}, Prop 13.37, Pg 190) in the long exact sequence of cohomology corresponding to the exact sequence

\[1\to \mu_{4[Z]}\to \Spin \As \to \PGO^+\As\to 1.\]

Since the maps $\mu_* : Z^*\to U_0(k)$ and $\chi' : \Omega\As(k) \to  \PGO^+\As(k)$ are surjective, an element $(f,z)\in U(k)$ is in the image $\mu_*\left(\Omega\As(k)\right)$ if and only if its image $[f,z]\in \HH^1\left(k, \mu_{4[Z]}\right)$ is in the image $S\left(\PGO^+\As(k)\right)$.

Therefore we look for an obstruction preventing $[f,z]$ from being in the image $S(\PGO^+\As(k))$. Recall the following commutative diagram with exact rows and columns :

\[\begin{tikzcd}
 & & & 1\arrow{d} &  \\
 & &  &  \mu_2\arrow{d} &  \\
1\arrow{r} & \mu_2 \arrow{r}\arrow{d} & \Spin\As \arrow{r}{\chi}\arrow{d}{id} & \Oo^+\As \arrow{r}\arrow{d}{\pi} & 1 \\
  1 \arrow{r} & \mu_{4[Z]}\arrow{r} & \Spin \As \arrow{r}{\chi'} & \PGO^+\As\arrow{r}\arrow{d}  & 1\\
 & & & 1 &  \\
\end{tikzcd}\]

The long exact sequence of cohomology induces the following commutative diagram with exact columns (\cite{KMRT}, Prop 13.36, Pg 189)

\begin{figure}[h]
\[
\begin{tikzcd}
\Oo^+\As(k) \arrow{rr}{Sn} \arrow{d}{\pi} & & \frac{k^*}{k^{*2}} \arrow{d}{i} \\
 \PGO^+\As(k) \arrow{rr}{\mathrm{S}} \arrow{d}{\mu} & & \HH^1\left(k, \mu_{4[Z]}\right)\arrow{d}{j}\\
  \frac{k^*}{k^{*2}} &  =  & \frac{k^*}{k^{*2}} 
\end{tikzcd}\]
\caption{Spinor norms and S for $n$ odd}
\label{commdiagram}
\end{figure}
 where

\begin{itemize}
\item[]
$\mu : \PGO^+\As(k)\to \frac{k^*}{k^{*2}}$ is induced by the multiplier map $\mu : \GO^+\As\to \mathbb{G}_m$
\item[]
$i : \frac{k^*}{k^{*2}} \to \HH^1\left(k, \mu_{4[Z]}\right) = \frac{U(k)}{U_0(k)}$ is the map sending $f k^{*2}\leadsto [f, f^2]$
\item[]
$j : \frac{U(k)}{U_0(k)} = \HH^1\left(k, \mu_{4[Z]}\right)\to \frac{k^*}{k^{*2}}$ is the map sending  $[f,z]\leadsto \N(z_0)k^{*2}$ where $z_0\in Z^*$ \vspace*{1mm}\\ is such that $z_0i(z_0)^{-1} = f^{-2}z$.
\end{itemize}

\phantom{--\\}
\begin{definition}
We call an element $(f,z)\in U(k)$ to be \textit{special} if there exists a $[g]\in \PGO^{+}\As(k)$ such that $j([f,z])=\mu([g])$.
\end{definition}

Let $(f,z)\in U(k)$ be a special element and let $[g]\in \PGO^{+}\As(k)$ be such that $j([f,z])=\mu([g])$. From the discussion above, it is clear that $(f,z)$ is in the image $\mu_*\left(\Omega\As(k)\right)$ if and only if $[f,z]$ is in the image $S\left(\PGO^+\As(k)\right)$.  

Thus $S([g])[f,z]^{-1}$ is in $\kernel j = \Image i$ and hence there exists some $\alpha\in k^*$ such that 

\[[f,z] = S([g])[\alpha, \alpha^2] \in \frac{U(k)}{U_0(k)}.\]

Note that if $g$ is changed by an element in $\Oo^+\As(k)$, then $\alpha$ changes by a spinor norm by Figure 1 above. \textit{Thus given a special element, we have produced a scalar $\alpha \in k^*$ which is well defined upto spinor norms.}

\begin{align*}
[f,z]\in S\left(\PGO^+\As(k)\right) & \iff [\alpha, \alpha^2]\in S\left(\PGO^+\As(k)\right) \\
& \iff (\alpha, \alpha^2) \in \mu_*\left(\Omega\As(k)\right).
\end{align*}

This happens if and only if there exists $w\in \Omega\As(k)$ such that

\begin{align*}
\alpha &= \muu(w)\\
\alpha^2 &= \xx(w)i(\xx(w))^{-1}\muu(w)^2
\end{align*}

This implies $\xx(w)\in k^*$ and hence $w\in \Gamma\As(k)$. Thus $\alpha$ is a spinor norm, being the similarity of an element in the special Clifford group. Also note if $\alpha$ is a spinor norm, then $\alpha = \muu(\gamma)$ for some $\gamma\in \Gamma\As(k)$ and $\mu_*(\gamma) = \left(\muu(\gamma), \muu(\gamma)^2\right)$. 

Thus a special element $(f,z)$ is in the image of $\mu_*$ if and only if the produced scalar $\alpha$ is a spinor norm. We call the class of $\alpha$ in $\frac{k^*}{\Sn}$ to be the scalar obstruction preventing the \textit{special} element $(f,z)\in U(k)$ from being in the image $\mu_*\left(\Omega\As(k)\right)$.

\subsection{An obstruction to being in the image of $\muu$ for $n$ even}
\label{neven}
Given $z\in Z^*$, we would like to formulate an obstruction which prevents $z$ from being in the image $\muu\left(\Omega\As(k)\right)$ . Note that for $z\in Z^*$, $\muu(z) = z^2$ and hence the subgroup $Z^{*2}$ is in the image $\muu\left(\Omega\As(k)\right)$.

Like in the case of odd $n$, we can construct the map $S : \PGO^{+}\As(k)\to \frac{Z^*}{Z^{*2}}$ induced by the following commutative diagram with exact rows (\cite{KMRT}, Definition 13.32, Pg 187) :

\[\begin{tikzcd}
1 \arrow{r} & Z^* \arrow{r} \arrow{d}{\muu}   & \Omega\As(k) \arrow{d}{\muu} \arrow{r}{\chi'} & \PGO^+\As(k) \arrow{d}{S}  \arrow{r} & 1 \\
1 \arrow{r}  & Z^{*2}  \arrow{r} & Z^{*} \arrow{r} &   \frac{Z^*}{Z^{*2}} \arrow{r}& 1
\end{tikzcd}\]

Again by the surjectivity of the maps, $\muu: Z^*\to Z^{*2}$ and $\chi' : \Omega\As(k) \to  \PGO^+\As(k)$, an element $z\in Z^*$ is in the image $\muu\left(\Omega\As(k)\right)$ if and only if its image $[z]\in \frac{Z^*}{Z^{*2}}$ is in the image $S\left(\PGO^+\As(k)\right)$. Therefore we look for an obstruction preventing $[z]$ from being in the image $S(\PGO^+\As(k))$. And as before, we arrive at the the following commutative diagram with exact rows and columns (\cite{KMRT}, Prop 13.33, Pg 188)

\phantom{hline}

\begin{figure}[hh]
\[
\begin{tikzcd}
\Oo^+\As(k) \arrow{rr}{Sn} \arrow{d}{\pi} & & \frac{k^*}{k^{*2}} \arrow{d}{i} \\
 \PGO^+\As(k) \arrow{rr}{\mathrm{S}} \arrow{d}{\mu} & & \frac{Z^*}{Z^{*2}}\arrow{d}{j}\\
  \frac{k^*}{k^{*2}} &  =  & \frac{k^*}{k^{*2}} 
\end{tikzcd}\]

\caption{Spinor norms and S for $n$ even}
\label{commdiagram}
\end{figure}

where

\begin{itemize}
\item[]
$\mu : \PGO^+\As(k)\to \frac{k^*}{k^{*2}}$ is induced by the multiplier map $\mu : \GO^+\As\to \mathbb{G}_m$
\item[]
$i : \frac{k^*}{k^{*2}} \to \frac{Z^*}{Z^{*2}}$ is the inclusion map
\item[]
$j : \frac{Z^*}{Z^{*2}}\to \frac{k^*}{k^{*2}}$ is induced by the norm map from $Z^*\to k^*$.
\end{itemize}


\begin{definition}
We call an element $z\in Z^*$ to be \textit{special} if there exists a $[g]\in \PGO^{+}\As(k)$ such that $j([z])=\mu([g])$.
\end{definition}

Let $z\in Z^*$ be a special element and let $[g]\in \PGO^{+}\As(k)$ be such that $j([z])=\mu([g])$. As before a \textit{special} element $z\in Z^*$ is in the image $\muu\left(\Omega\As(k)\right)$ if and only if $[z]$ is in the image $S\left(\PGO^+\As(k)\right)$.

Thus $S([g])[z]^{-1}$ is in $\kernel j= \Image i$ and hence there exists some $\alpha\in k^*$ such that 

\[[z] = S([g])[\alpha] \in \frac{Z^*}{Z^{*2}}.\]

Note that if $g$ is changed by an element in $\Oo^+\As(k)$, then $\alpha$ changes by a spinor norm by Figure 2 above. \textit{Thus given a special element, we have produced a scalar $\alpha \in k^*$ which is well defined upto spinor norms.}  

\begin{align*}
[z]\in S\left(\PGO^+\As(k)\right) & \iff [\alpha]\in S\left(\PGO^+\As(k)\right) \\ 
& \iff (\alpha)\in \muu\left(\Omega\As(k)\right).
\end{align*}

Since $\alpha\in k^*$ also, this is equivalent to $\alpha$ being a spinor norm (\cite{KMRT}, Prop 13.25, Pg 184).

We call the class of $\alpha$ in $\frac{k^*}{\Sn}$ to be the scalar obstruction preventing the \textit{special} element $z\in Z^*$ from being in the image $\muu\left(\Omega\As(k)\right)$.

\subsection{Scharlau's norm principle for $\mu : \GO^+\As \to \mathbb{G}_m$}
Let $\mu : \GO^+\As\to \mathbb{G}_m$ denote the multiplier map and let $L/k$ be a separable field extension of finite degree. Let $g_1\in GO^+\As(L)$ be such that $\mu\left(g_1\right)=f_1\in L^*$. Let $f$ denote $\N_{L/k}\left(f_1\right)$. We would like to show that $f$ is in the image $\mu\left(\GO^+\As(k)\right)$. 

Note that by a generalization of Scharlau's norm principle (\cite{KMRT}, Prop 12.21; \cite{JB}, Lemma 4.3) there exists a $\tilde{g}\in \GO\As(k)$ such that $f = \mu(\tilde{g})$ . However we would like to find a \textit{proper} similitude $g\in \GO^+\As(k)$ such that $\mu(g)=f$.

We investigate the cases when the algebra $A$ is non-split and split separately.

\subsubsection*{Case I : $A$ is non-split}
Note that $g_1\in \GO^+\As(L)$. If $\tilde{g}\in \GO^+\As(k)$, we are done. Hence assume $\tilde{g}\not\in \GO^+\As(k)$. By a generalization of Dieudonn\'e's theorem (\cite{KMRT}, Thm 13.38, Pg 190), we see that the quaternion algebras

\begin{align*}
B_1 = \left(Z, f_1\right) & = 0 \in \Br(L), \\
B_2 = \left(Z, f \right) & = A \in Br(k).
\end{align*}

Since $A$ is non-split, $B_2\neq 0\in \Br(k)$. However co-restriction of $B_1$ from $L$ to $k$ gives a contradiction, because

\[0 = \Cor B_1 = \left(Z, \N_{L/k}(f_1) \right) = B_2 \in \Br(k).\]

Hence $\tilde{g}\in \GO^+\As(k)$.

\subsubsection*{Case II : $A$ is split}
Since $A$ is split, $A = \End V$ where $(V,q)$ is a quadratic space and $\sigma$ is the adjoint involution for the quadratic form $q$. Again, if $\tilde{g}\in \GO^+\As(k)$, we are done. Hence assume $\tilde{g}\not\in \GO^+\As(k)$. That is 

\[\det(\tilde{g})= -f^{2n/2} = -(f^n).\]  

Since $A$ is of even degree ($2n$) and split, there exists an isometry\footnote{Since $V$ is of even dimension $2n$, $h$ can be chosen to be a hyperplane reflection for instance} $h$ of determinant $-1$. Set $g = \tilde{g}h$. Then $\det(g)=f^n$ where $\mu(g)=f$. Thus we have found a suitable $g\in \GO^+\As(k)$ which concludes the proof of the following : 

\begin{theorem}
\label{normprincipleGO+} The norm principle holds for the map $\mu : \GO^+\As \to \mathbb{G}_m$. 
\end{theorem}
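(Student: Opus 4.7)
The plan is to reduce the problem to promoting an arbitrary similitude (obtained via an existing norm principle) to a proper similitude. First, I would invoke the generalized Scharlau norm principle for algebras with orthogonal involution (\cite{KMRT}, Prop 12.21, or \cite{JB}, Lemma 4.3), which yields some $\tilde{g}\in \GO\As(k)$ with $\mu(\tilde{g})=f=\N_{L/k}(f_1)$. The entire substance of the theorem then lies in upgrading $\tilde{g}$ from $\GO\As(k)$ to $\GO^+\As(k)$; if $\tilde{g}$ is already proper, there is nothing to do.

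Assuming $\tilde{g}\notin \GO^+\As(k)$, I would split into two cases based on whether $A$ is split. In the non-split case, the strategy is to derive a contradiction, showing that this case cannot actually occur. A generalization of Dieudonn\'e's theorem for similitudes of algebras with involution (\cite{KMRT}, Thm 13.38) relates the properness of a similitude $g$ to the Brauer class of the quaternion algebra $(Z,\mu(g))$, where $Z/k$ is the discriminant extension. Applied to the proper similitude $g_1\in \GO^+\As(L)$, this gives $(Z_L, f_1)=0\in \Br(L)$; applied to the hypothetical improper $\tilde{g}$, it gives $(Z, f)=[A]\in \Br(k)$. Corestriction from $L$ to $k$ then yields $0=\Cor_{L/k}(Z_L, f_1)=(Z, \N_{L/k}(f_1))=(Z,f)=[A]$, forcing $A$ to be split, contradicting the case hypothesis. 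So in fact $\tilde{g}\in \GO^+\As(k)$ automatically.

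In the split case, write $A=\End V$ with $\sigma$ the adjoint involution of a quadratic form on the $2n$-dimensional space $V$. Because $V$ has even dimension, I can choose an isometry $h\in \Oo(V,q)$ of determinant $-1$, e.g.\ a hyperplane reflection with respect to an anisotropic vector. Setting $g:=\tilde{g}h$, one has $\mu(g)=\mu(\tilde{g})\mu(h)=f$, while $\det(g)=-\det(\tilde{g})=f^n$ places $g$ in $\GO^+\As(k)$, completing the construction in this case.

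The main obstacle I anticipate is the Brauer-class bookkeeping in the non-split case, which depends on the precise statement of the generalized Dieudonn\'e theorem identifying proper vs.\ improper similitudes via the symbol $(Z,\mu(g))$, and on the good behaviour of corestriction of quaternion symbols with one slot fixed across the extension $L/k$. Once these ingredients are in place the argument is structural; the split case is essentially trivial, since the existence of improper isometries in even dimension lets one toggle parity freely.
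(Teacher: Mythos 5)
Your proposal is correct and follows essentially the same route as the paper's proof: invoke the generalized Scharlau norm principle to obtain some $\tilde{g}\in \GO\As(k)$ with $\mu(\tilde{g})=\N_{L/k}(f_1)$, then handle the non-split case by combining the generalized Dieudonn\'e theorem with corestriction of the quaternion symbol $(Z,\,\cdot\,)$ to show $\tilde{g}$ is automatically proper, and handle the split case by multiplying by an improper isometry $h$ to flip the determinant. Your explicit remark that the hyperplane reflection should be taken with respect to an anisotropic vector is a small but accurate refinement that the paper leaves implicit.
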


\subsection{Spinor obstruction to norm principle for non-trialitarian $D_n$}
Let $L/k$ be a separable field extension of finite degree. And let $w_1\in \Omega\As(L)$ be such that  for 

\begin{itemize}
\item[] $n$ odd : $\mu_{*}(w_1) = \theta$ which is equal to $(f_1,z_1) \in U(L)$,
\item[] $n$  even : $\muu(w_1) = \theta$ which is equal to $z_1 \in \left(R_{Z/k}\mathbb{G}_m\right)(L)$.
\end{itemize}

We would like to investigate whether $\N_{L/k}(\theta)$ is in the image of $\mu_*\left(\Omega\As(k)\right)$ (resp $\muu\left(\Omega\As(k)\right)$ ) when $n$ is odd (resp. even) in order to check if the norm principle holds for the map $\mu_* : \Omega\As\to U$ (resp. $\muu : \Omega\As\to R_{Z/k}\mathbb{G}_m$).

\newpage

Let $[g_1]\in \PGO^+\As(L)$ be the image of $w_1$ under the canonical map $\chi' : \Omega\As(L)\to \PGO^+\As(L)$. Clearly $\theta$ is \textit{special} and let $g_1\in \GO^+\As(L)$ be such that $\mu([g_1])=j([\theta])$.
\vspace*{5mm}\\
By Theorem \ref{normprincipleGO+}, there exists a $g\in \GO^+\As(k)$ such that\footnote{The map $j$ commutes with $\N_{L/k}$ in both cases.} \[\mu([g])=\N_{L/k}\left(j[\theta]\right)=j\left([\N_{L/k}\theta]\right).\] Hence $\N_{L/k}(\theta)$ is \textit{special}.

By Subsection \ref{nodd} (resp. \ref{neven}) , $\N_{L/k}(\theta)$ is in the image of $\mu_*$ (resp $\muu$) if and only if the scalar obstruction $\alpha\in \frac{k^*}{\Sn}$ defined for $\N_{L/k}(\theta)$ vanishes. Thus we have a spinor norm obstruction given below. 

\begin{theorem}[Spinor norm obstruction] Let $L/k$ be a finite separable extension of fields. Let $f$ denote the map $\mu_*$ (resp $\muu$) in the case when $n$ is odd (resp. even). Given $\theta\in f\left(\Omega\As(L)\right)$, there exists scalar obstruction $\alpha\in k^*$ such that 
\[N_{L/k}(\theta) \in f\left(\Omega\As(k)\right)\iff \alpha=1\in \frac{k*}{\Sn}.\]
\end{theorem}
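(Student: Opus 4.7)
The plan is to assemble the ingredients developed in Subsections \ref{nodd}, \ref{neven} and Theorem \ref{normprincipleGO+}. Start with $w_1 \in \Omega\As(L)$ satisfying $f(w_1) = \theta$, and push it forward through $\chi' : \Omega\As(L) \to \PGO^+\As(L)$ to obtain a class $[g_1]$. Chasing $w_1$ around the commutative diagram that defined $S$ over $L$ yields $\mu([g_1]) = j([\theta])$, so $\theta$ is a \emph{special} element over $L$ in the sense of the preceding subsections.

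Next, apply Theorem \ref{normprincipleGO+} to the scalar $\mu([g_1]) = j([\theta]) \in L^*/L^{*2}$: this produces a proper similitude $g \in \GO^+\As(k)$ with $\mu([g]) = \N_{L/k}(\mu([g_1]))$. Since the map $j$ commutes with $\N_{L/k}$ in both parities (immediate from its defining formula), one obtains $\mu([g]) = j\bigl([\N_{L/k}(\theta)]\bigr)$, which certifies that $\N_{L/k}(\theta)$ is itself \emph{special} over $k$, with witness $[g]$.

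With $\N_{L/k}(\theta)$ special, invoke the scalar-obstruction construction of Subsection \ref{nodd} (for $n$ odd) or Subsection \ref{neven} (for $n$ even), applied over $k$ with witness $[g]$: it produces a scalar $\alpha \in k^*$ whose class in $k^*/\Sn$ is well defined, because altering $g$ within its $\Oo^+\As(k)$-coset changes $\alpha$ only by a spinor norm, thanks to the commutative diagrams relating the spinor norm map, $\mu$ and $S$. The chain of equivalences already recorded in those subsections then yields
\[\N_{L/k}(\theta) \in f\bigl(\Omega\As(k)\bigr) \iff [\N_{L/k}(\theta)] \in S\bigl(\PGO^+\As(k)\bigr) \iff \alpha = 1 \in k^*/\Sn,\]
which is the dichotomy claimed in the theorem.

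The main non-trivial input is Theorem \ref{normprincipleGO+} itself: upgrading the similitude produced by the generalised Scharlau norm principle to a \emph{proper} similitude required separate analyses for the split and non-split cases (via Dieudonn\'e's theorem, and, in the split case, composition with a hyperplane reflection of determinant $-1$). Once that is in hand, the present statement is essentially a bookkeeping exercise: gluing specialness over $L$ to the norm principle for the multiplier, and then invoking the scalar-obstruction construction built up in the two preceding subsections.
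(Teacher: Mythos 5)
Your proposal is correct and follows essentially the same line of argument as the paper: pushing $w_1$ through $\chi'$ and the defining diagram for $S$ to certify $\theta$ is special over $L$, invoking Theorem \ref{normprincipleGO+} together with the fact that $j$ commutes with $\N_{L/k}$ to certify $\N_{L/k}(\theta)$ is special over $k$, and then reading off the scalar obstruction from the construction in Subsections \ref{nodd}/\ref{neven}. The bookkeeping and the identification of the key input (upgrading the Scharlau norm principle to proper similitudes) match the paper's proof.
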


Thus the norm principle for the canonical map \[\Omega\As\to \frac{\Omega\As}{\left[\Omega\As,\Omega\As\right]}\] and hence for non-trialitarian $D_n$ holds if and only if the scalar obstructions are spinor norms.

\section{Groups with quasi-split simply connected covers}
Let $G$ be a connected reductive $k$-group and let $G'$ denote its derived subgroup.  Let $G^{sc}$ denote the simply connected cover of $G'$. Then one has the exact sequence $1\to C\to G^{sc}\to G'\to 1$, where $C$ is a finite $k$-group of multiplicative type, central in $G^{sc}$. The group $C$ is also sometimes termed the \textit{fundamental group} of $G'$. 
\vspace*{5mm}\\
Let $G$ be any connected reductive $k$-group whose Dynkin diagram does not contain connected components of type $E_8$. Assume further that $G^{sc}$ is quasi-split. We would like to show that $G$ satisfies $SQ$ by following the reduction techniques used in Sections \ref{preliminaries} and \ref{GG}. 
\vspace*{3mm}\\
\begin{lemma}
\label{quasisplit} Let $G$ be a connected reductive $k$-group. If $G^{sc}$ is quasi-split, then there exists a $z$-extension $1\to Q\to H\xrightarrow{\psi} G\to 1$, where $Q$ is a quasi-trivial $k$-torus, central in reductive $k$-group $H$ with $H'$ simply connected and quasi-split.
\end{lemma}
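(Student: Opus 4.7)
The plan is to combine the general existence of $z$-extensions with the uniqueness of the simply connected cover. First, I would apply the classical result (\cite{MS}, Prop.~3.1; \cite{BB}, Lem.~1.1.4) to obtain \emph{some} central extension $1\to Q\to H\xrightarrow{\psi} G\to 1$ in which $Q$ is a quasi-trivial $k$-torus, $H$ is a reductive $k$-group, and $H'$ is simply connected. This construction does not involve $G^{sc}$ explicitly, so the remaining task is to verify that the $H$ thus produced automatically has $H'$ quasi-split.

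The key observation is that in any such $z$-extension the derived subgroup $H'$ must coincide with $G^{sc}$. Since $\psi$ is surjective, the image $\psi(H')$ equals $\psi(H)' = G'$, because the derived subgroup of a surjective image is the image of the derived subgroup. The kernel of the restriction $\psi|_{H'}:H'\twoheadrightarrow G'$ is $H'\cap Q$, which is central in $H'$ because $Q$ is central in $H$. Hence $\psi|_{H'}$ is a central isogeny, and as $H'$ is simply connected the universal property of the simply connected cover furnishes a canonical $k$-isomorphism $H'\cong G^{sc}$. By hypothesis $G^{sc}$ is quasi-split, so $H'$ is quasi-split, completing the proof.

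I do not foresee a substantial obstacle here: the content of the lemma is essentially the compatibility of the two constructions (taking the derived subgroup of a $z$-extension and taking the simply connected cover of a derived subgroup). The only point that needs some care is that the equality $\psi(H')=G'$ and the universal property of $G^{sc}$ are applied in the category of algebraic $k$-groups, which in the characteristic zero setting fixed in Section~\ref{preliminaries} is standard.
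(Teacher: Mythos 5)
Your proof is correct and follows the same route as the paper: the paper's one-line argument is precisely that $\psi|_{H'}$ realizes the simply connected cover of $G'$, hence $H'\cong G^{sc}$ is quasi-split. You have simply spelled out the details (surjectivity onto $G'$, finiteness and centrality of $H'\cap Q$, and the universal property of the simply connected cover) that the paper leaves implicit.
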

\begin{proof}
This is simply because $\psi|_{H'}: H'\to G$ yields the simply connected cover of $G'$.
\end{proof}

Lemmata \ref{quasitrivialext} and \ref{quasisplit} imply that we can restrict ourselves to connected reductive $k$-groups $G$ such that $G'$ is simply connected and quasi-split.
\vspace*{5mm}\\
 If $k$ is a finite field, Steinberg's theorem tells us that $\HH^1(k, G)=1$. Hence $G$ satisfies $SQ$ vacaously. Therefore, let us assume that $k$ is an infinite field from here on.
\vspace*{3mm}\\
\begin{lemma} \label{sur} Let $H$ be any reductive $k$-group such that its derived subgroup $H'$ is semi-simple simply connected and quasi-split. Let $T$ denote the $k$-torus  $H/H'$. Then the natural exact sequence $1\to H'\to H\xrightarrow{\phi} T\to 1$ induces surjective maps $\phi(L) : H(L)\to T(L)$ for all field extensions $L/k$. In particular, the norm principle holds for $\phi : H\to T$. 
\end{lemma}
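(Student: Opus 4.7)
My plan is to produce, for each field extension $L/k$, a $k$-subtorus of $H$ whose $L$-points already surject onto $T(L)$; once surjectivity of $\phi(L)$ is established, the norm-principle claim is purely formal.

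First, since $H'$ is quasi-split, fix a Borel subgroup $B'\subset H'$ defined over $k$ and choose a maximal $k$-torus $T'\subset B'$ (for instance as the centralizer of a maximal $k$-split torus of $H'$), so that $B' = T'\ltimes U'$ with $U' = R_u(B')$. The crux of the argument, and the only place a non-formal input enters, is to observe that $T'$ is a quasi-trivial $k$-torus. Since $H'$ is simply connected, the cocharacter lattice $X_*(T'_{\bar k})$ coincides with the coroot lattice $Q^{\vee} = \bigoplus_i \mathbb{Z}\alpha_i^{\vee}$ generated by the simple coroots determined by $B'$. Because $B'$ is $k$-defined, the absolute Galois group $\Gamma_k$ preserves the set of positive roots and hence permutes the simple roots and simple coroots. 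Consequently $X_*(T'_{\bar k})$ is a permutation $\mathbb{Z}[\Gamma_k]$-module, so $T' \cong \prod_j R_{L_j/k}\mathbb{G}_m$ for some finite separable extensions $L_j/k$; Hilbert 90 combined with Shapiro's lemma then yields $\HH^1(L, T') = 1$ for every extension $L/k$.

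With this vanishing in hand, surjectivity is straightforward to assemble. Let $Z_0 := Z(H)^0$ be the connected center of $H$ and set $T_H := Z_0\cdot T'$, a maximal $k$-torus of $H$. Using $Z_0\cap H' \subset Z(H') \subset T'$ one checks that $T_H\cap H' = T'$, and the restriction $\phi|_{T_H}$ fits into an exact sequence of $k$-tori
\[
1 \to T' \to T_H \to T \to 1.
\]
The associated long exact sequence in cohomology, together with $\HH^1(L, T') = 1$, forces $T_H(L) \twoheadrightarrow T(L)$, and composing with $T_H\hookrightarrow H$ gives the required surjectivity of $\phi(L)$. For the ``in particular'' assertion, applying this surjectivity at both $L$ and $k$ yields $\phi(H(k)) = T(k) \supseteq N_{L/k}(T(L)) = N_{L/k}(\phi(H(L)))$, which is exactly the norm-principle condition for $\phi$.

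The only step of real substance is the quasi-triviality observation for $T'$; everything else is either a structural fact about the reductive $k$-group $H$ or a formal consequence of the cohomological long exact sequence. Once one recognizes that simply connected $+$ quasi-split forces the maximal torus of a $k$-Borel to have permutation cocharacter module, the proof essentially writes itself, and I do not anticipate any serious obstacle.
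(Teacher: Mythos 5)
Your proof is correct and follows essentially the same route as the paper: both arguments hinge on finding a quasi-trivial maximal $k$-torus $T'$ of $H'$, enlarging it to a maximal torus of $H$ that fits into the exact sequence $1\to T'\to T_H\to T\to 1$, and then invoking $\HH^1(L,T')=1$ to get surjectivity. The paper cites \cite{HS} (Lemmas 6.6 and 6.7) for the existence of the quasi-trivial maximal torus and the surjectivity of $\phi|_{T_H}$, whereas you prove the quasi-triviality directly from the coroot lattice being a permutation module and construct $T_H = Z(H)^0\cdot T'$ explicitly, which is a nice self-contained substitute.
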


\begin{proof}
There exists a quasi trivial maximal torus $Q_1$ of $H'$ defined over $k$ (\cite{HS}, Lem 6.7). Let $Q_1\subset Q_2$, where $Q_2$ is a maximal torus of $H$ defined over $k$. The proof of (\cite{HS}, Lem 6.6) shows that $\phi |_{Q_2} : Q_2\to T$ is surjective and that $Q_2\cap H'$ is a maximal torus of $H'$. Since $Q_2\cap H'\subseteq Q_1$, we get the following extension of $k$-tori 
\[1\to Q_1\to Q_2\to T\to 1\]
Since $Q_1$ is quasitrivial, $\HH^1\brac{L,Q_1}=0$ for any field extension $L/k$ which gives surjectivity of $\phi(L) : Q_2(L)\to T(L)$ and hence of $\phi(L) : H(L)\to T(L)$.

\end{proof}

Let $\hat{G}$ be an envelope of $G'$ defined using an embedding of $\mu=Z(G')$ into a quasi-trivial torus $S$. Note that $G'$ is assumed to be simply connected and quasi-split and is also the derived subgroup of $\hat{G}$ by construction.

\[
\begin{tikzcd}
    \mu \arrow{r}{\delta} \arrow{d}{\rho} & G'\arrow{d} \\
        S \arrow{r}{\gamma}      & \hat{G}
\end{tikzcd}
\]

Thus, we get an exact sequence $1\to G'\to \hat{G} \to \hat{G}/G' \to 1$ to which we can apply Lemma \ref{sur} to conclude that the norm principle holds for the canonical map $\hat{G}\to \frac{\hat{G}}{\sqbrac{\hat{G},\hat{G}}}$.
\vspace*{3mm}\\
Constructing the intermediate group $\tilde{G}$ as in Section \ref{GGtilde}, we see that the norm principle also holds for the natural map  $\tilde{G}\to \tilde{G}/G$ (\cite{BM}, Prop 5.1). Then using  Thm \ref{Jodi} (\cite{JB}) and Lemma \ref{SQandnorm}, we can conclude that Thm $\ref{quasisplit-serre}$ (restated below) holds.
\vspace*{5mm}\\
\textbf{Theorem 1.3}
\phantom{--\\}
Let $k$ be a field of characteristic not $2$. Let $G$ be a connected reductive $k$-group whose Dynkin diagram does not contain connected components  of type $E_8$. Assume further that $G^{sc}$ is quasi-split. Then Serre's question has a positive answer for $G$.
\vspace*{7mm}\\

\textit{Acknowledgements} : The author acknowledges support from the NSF-FRG grant 1463882. She also thanks Professors A.S Merkurjev, R. Parimala and O.Wittenberg for their many valuable suggestions and critical comments.

\end{document}